\newtheorem{theorem}{Theorem}[section]
\newtheorem{rev}[theorem]{Revision}
\newtheorem{cor}[theorem]{Corollary}
\newtheorem{definition}[theorem]{Definition}
\newtheorem{lemma}[theorem]{Lemma}
\newtheorem*{th*}{Theorem}
\theoremstyle{remark}
\newtheorem{remark}[theorem]{$\textbf{Remark}$}
\newtheorem{remarks}[theorem]{$\textbf{Remarks}$}
\newtheorem{fact}[theorem]{$\textbf{Fact}$}
\newtheorem{assumption}[theorem]{$\textbf{Assumption}$}
\newcommand{\ag}{\`}
\newcommand{\noi}{\noindent}
\newcommand{\Q}{\mathbb{Q}}
\newcommand{\C}{\mathbb{C}}
\newcommand{\Z}{\mathbb{Z}}
\newcommand{\N}{\mathbb{N}}
\newcommand{\PP}{\mathbb{P}^1}
\newcommand{\QQ}{\mathbb{P}}
\newcommand{\OO}{\mathcal{O}}
\title{Dynamical systems with a parallel tensor}
\author{Jacopo Garofali}
\date{June 2018}
\begin{document}
\maketitle
\begin{abstract}
  We classify those rational maps $f:\PP \to \PP$ for which there exists a contravariant tensor $q$ which is parallel, i.e. such that $f^*q\,//\,q$, by proving that such maps preserve a parabolic orbifold.
\end{abstract}

\section{Introduction}
A holomorphic dynamical system on the Riemann sphere $\PP$
is the data of a rational map $f:\PP\to \PP$.
From the viewpoint of Dynamics, the principal object of interest
is the study of the space of orbits $\PP/f$ under the equivalence
relation generated by \textit{f}, namely $z \sim w$ if and only if
there exist nonnegative integers $n$ and $m$ such that $f^n(z)=f^m(w)$. \\
As one can imagine, this problem is not easily solved since the
quotient space $\PP/f$ does not exist in general,
at least not in a classic sense.
The fact that the dynamical system is not given by the action of a group,
unless the degree of the map \textit{f} is 1,
is one of the main obstacles in the matter.\\
However we can always consider the
simplicial object, $\QQ^{\bullet}_f$, associated to the dynamical
system which in degrees $0$ and $1$ is given by,
$$
\PP \leftleftarrows \coprod_{n\in\N}
\Gamma_{f^n}:=\coprod_{n\in\N}\{(x,f^n(x))\,:\,x\in \PP\}
$$
and we can identify sheaves on $\PP/f$ with
a simplicial sheaf on $\QQ^{\bullet}_f$ in the
sense of \cite[5.I.6]{MR0498552}. Consequently to give
a sheaf on $\QQ^{\bullet}_f$ is equivalent to giving
a sheaf $\mathcal{F}$ on $\PP$ together with a
map of sheaves,
$$
A_f: f^*\mathcal{F}\rightarrow\mathcal{F}
$$
One question that arises naturally in the study of dynamical
systems is whether we are able to define ``objects'' that are
invariant for the dynamics (i.e. global sections of a simplicial sheaf)
and if possible, understand their nature. \\
Specifically given a simplicial sheaf $\mathcal{F}$
as above on $\QQ^{\bullet}_f$, a global section is,
by definition, an element $q \in H^0(\PP,\mathcal{F})$ which is invariant for
the action of \textit{f}, i.e.
$$A_f(f^*q)=q,$$
and we write $q \in H^0(\PP/f,\mathcal{F})$
as a shorthand for  $H^0(\QQ^{\bullet}_f,\mathcal{F})$.
\\ \\

\noi The main purpose of our work is to investigate the existence
of $k$-th differentials on $\PP$ which, after twisting by a locally
constant simplicial line bundle, are invariant for the dynamical
system generated by \textit{f},
and,  to classify them.\\
We have been introduced to this problem while we were studying the work
of Adam L. Epstein \cite{1999math......2158E}.
In his extension of \textit{Infinitesimal Thurston rigidity}
he shows that a rational map $f:\PP \to \PP$ of degree $d>1$,
for which there exists a meromorphic quadratic differential
$q$ with $f^*q=d\,q$ is a ``Latt\ag es map'' \cite{2004math......2147M}.
This led us to ask for which maps
$f:\PP \to \PP$ does there exist a non-zero meromorphic global section $q$ of $\Omega_{\PP}^{\otimes k}$ and a constant $\lambda\in \C^*$ such that:
\begin{equation}\label{assumption}
f^*q=\lambda q
\end{equation}
wherein we employ the standard convention, \cite{MR0498552}, of identifying
$A_f$ of a differential with its image.\\ \\

To interpret \eqref{assumption} in the simplicial language of
\cite{MR0498552}, observe that we have a simplicial local
system $L_{\lambda}$ given by way of the action on the trivial sheaf,
$$
A_f(f^* 1) := \lambda
$$
and we define $\Omega_{\PP}^{\otimes k}(\lambda):=
\Omega_{\PP}^{\otimes k}\otimes_{\OO}L_{\lambda^{-1}}$,
so that a mermorphic differential satisfies \eqref{assumption}
if and only if,
$$q \in H^0(\PP/f,\Omega_{\PP}^{\otimes k}(\lambda)).$$
In any case, whether in the simplicial language or
the more elementary \eqref{assumption} our main theorem is:

\begin{th*}
All the rational maps $f:\PP \to \PP$ of degree $d>1$ for which
there exists a nonzero holomorphic section
$q\in H^0(\PP/f,\Omega_{\PP}^{\otimes k}(\lambda))$ are
(modulo at worst an element of $PGL_2(\C)$ of order 2 or 3)
equivalent to the action of an endomorphism of elliptic curves,
and thus the action of \textit{f} comes from the action
of a group of automorphisms of $\C$.\\
They are all listed in \hyperref[table]{ Table \ref*{table} }.
\end{th*}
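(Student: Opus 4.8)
The plan is to squeeze everything out of a single divisor identity and then hand the result over to the classical theory of parabolic orbifolds. Write $D=\operatorname{div}(q)$, a divisor of degree $\deg\Omega_{\PP}^{\otimes k}=-2k$. Comparing the divisors of the two sides of $f^{*}q=\lambda q$ and using Riemann--Hurwitz in the form $f^{*}\Omega_{\PP}=\Omega_{\PP}(-R_{f})$, with $R_{f}=\sum_{p}(e_{p}(f)-1)[p]$ the ramification divisor, gives $D=f^{*}D+kR_{f}$; pointwise, $\operatorname{ord}_{p}q+k=e_{p}(f)\bigl(\operatorname{ord}_{f(p)}q+k\bigr)$. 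Setting $c_{p}:=\operatorname{ord}_{p}q+k$ this is the clean recursion $c_{p}=e_{p}(f)\,c_{f(p)}$, which iterates to $c_{p}=e_{p}(f^{n})\,c_{f^{n}(p)}$. From $e_{p}(f)\ge 1$ and finiteness of $\operatorname{div}(q)$ I would first deduce that $\{c_{p}\le 0\}$ (the poles of order $\ge k$) is a finite completely invariant set, hence contained in the exceptional set $E_{f}$ (the maximal finite completely invariant set, of cardinality $\le 2$); since an exceptional point is periodic and totally ramified, the recursion forces $c_{p}=0$ there, whence $\{c_{p}\le 0\}=E_{f}=\{c_{p}=0\}$. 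The same ``iterated preimages of a zero are zeros'' argument shows the zero set of $q$ is finite and backward invariant, hence lies in $E_{f}=\{c_{p}=0\}$ — impossible unless empty. So $q$ has \emph{no zeros}, only poles, of orders in $\{1,\dots,k\}$, with total pole order $-\deg D=2k$.

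Next I would build the orbifold. Put $\nu(p):=k/c_{p}\in\{1,2,3,\dots\}\cup\{\infty\}$, which is $1$ off the finite pole set $\Pi$ and $\infty$ on $E_{f}$; the recursion becomes the orbifold--covering identity $\nu(f(p))=e_{p}(f)\,\nu(p)$. Because $q$ has no zeros, any critical point of $f$ must map into $\Pi$ (otherwise one of its forward images acquires $c>k$, i.e. becomes a zero), so $P_{f}\subseteq\Pi$; conversely a pole $p$ of order $<k$ with $p\notin P_{f}$ would have infinitely many iterated preimages (as $E_{f}\subseteq P_{f}$), none of them a critical point of any iterate, hence all poles of $q$ of the same order — contradicting $\#\Pi<\infty$. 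Thus $P_{f}=\Pi$ and $f$ is postcritically finite. Iterating $\nu(f(p))=e_{p}(f)\nu(p)$ and using that every $x\notin E_{f}$ has an iterated preimage $y\notin\Pi$ (because $\Pi$ is finite) identifies $\nu(x)$ with $e_{y}(f^{n})$, hence with $\operatorname{lcm}\{e_{y}(f^{n}):f^{n}(y)=x\}$; i.e. $\nu$ is Thurston's canonical orbifold $\mathcal{O}_{f}$ of $f$, in particular genuinely integer valued. Finally $\chi(\PP,\nu)=2-\sum_{p}\bigl(1-\tfrac1{\nu(p)}\bigr)=2-\tfrac1k\sum_{p}(k-c_{p})=2-\tfrac1k(-\deg D)=2-2=0$, so $\mathcal{O}_{f}$ is a \emph{parabolic} orbifold — which is the claim of the abstract.

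To get the explicit list I would then invoke the structure theory of parabolic orbifolds and of the maps realising them, as in \cite{2004math......2147M}. The signature of $\mathcal{O}_{f}$ must be one of $(\infty,\infty)$, $(2,2,\infty)$, $(2,2,2,2)$, $(3,3,3)$, $(2,4,4)$, $(2,3,6)$; a suitable power of its canonical bundle trivialises on the orbifold universal cover $\C\to\PP$, whose deck group $\Gamma$ is a crystallographic subgroup of $\operatorname{Aut}(\C)=\{z\mapsto az+b\}$ — of rank $1$, infinite dihedral, or $\Lambda$ resp. $\Lambda\rtimes\Z/m$ with $m\in\{2,3,4,6\}$ and $\Lambda$ a lattice, the lattice being forced to admit complex multiplication in the last two cases. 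The orbifold covering $f$ lifts to an affine map $z\mapsto az+b$ normalising $\Gamma$ (with $|a|^{2}=\deg f$ in the rank-$2$ cases), so after conjugating by the covering and absorbing the residual symmetry of $\Gamma$ — an element of $PGL_{2}(\C)$ which one checks may be taken of order $2$ or $3$ — $f$ is the map induced on $\C/\Gamma$ by an endomorphism of $\C$: a Lattès map when $\C/\Gamma$ is (a finite quotient of) an elliptic curve, and a power map or Chebyshev map in the degenerate cases, these being quotients of endomorphisms of $\mathbb{G}_{m}$ and $\mathbb{G}_{a}$. Running through the finitely many admissible pairs $(\Gamma,a)$ fills in Table~\ref{table}.

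The substantive step is the middle one: promoting the bare divisor relation to ``$\mathcal{O}_{f}$ is parabolic''. Everything before it is formal juggling of orders of vanishing, and everything after it is classical; the delicate point — and the place where $\deg f>1$ is used essentially — is ruling out zeros and stray poles of order $>k$ of $q$, via the complete invariance of $E_{f}$, so that $\nu$ really is the canonical orbifold. I expect the only remaining bookkeeping to be in the last paragraph, matching each admissible $(\Gamma,a)$ with an explicit normal form and thereby reconstructing the entries of Table~\ref{table}.
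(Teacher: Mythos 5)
Your proposal is correct, and it reaches the paper's key intermediate result --- that $f$ preserves a parabolic orbifold with one of the six signatures --- by a genuinely different and shorter route. The paper proves the absence of zeros and the bounds $-k\le \operatorname{ord}_x(q)\le -\tfrac k2$ by comparing degrees of pulled-back divisors (via $\deg(f^*\mathcal Z)=d\,\deg(\mathcal Z)\le\deg(\mathcal Z)$), and then identifies the signature by an explicit case-by-case enumeration of the solutions of $\sum_i n_i\alpha_i=2$, checking the divisibility condition of Definition \ref{def1} separately in each of the six cases. You instead feed the same recursion $c_p=e_p(f)\,c_{f(p)}$ into the standard dynamical fact that a finite backward-invariant set is contained in the exceptional set $E_f$ (itself a short consequence of Riemann--Hurwitz): this kills the zeros and the poles of order greater than $k$ at one stroke, the integrality of $\nu=k/c$ then comes from the infinitude of backward orbits of non-exceptional points rather than from inspection, and $\chi(\mathcal O)=0$ drops out in one line from $\deg\Omega_{\PP}^{\otimes k}=-2k$ together with the absence of zeros, so the six signatures are just the classical enumeration of parabolic signatures rather than a bespoke case analysis. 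What the paper's longer computation buys in exchange is the explicit local data (pole orders $\bigl(1-\tfrac1n\bigr)k$ and the admissible local degrees over each orbifold point) that it reuses in Lemma \ref{lemma:trivial}; your covering identity $\nu(f(p))=e_p(f)\,\nu(p)$, which holds with equality, recovers all of this. For the final step both arguments ultimately defer to the Douady--Hubbard/Milnor classification of maps preserving a parabolic orbifold; the paper supplements it with the $\mu_n$-torsor construction of Lemmas \ref{torsor}--\ref{lemma3} where you appeal to the orbifold universal cover and its crystallographic deck group, but the content is the same and the entries of Table \ref{table} come out identically.
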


\noi \textbf{Acknowledgements}
We owe profound thanks to Michael McQuillan for his unflagging patience in guiding us throughout this work.

\section{A simple case}
The space $Rat(d)$ of all rational maps $f:\PP \to \PP$
with $deg(f)=d$, is never a group unless $d=1$, i.e. the group of automorphisms of the
complex projective line $PGL_2(\C)$.\\
The subgroup generated by $f \in PGL_2(\C)$ is clearly isomorphic
to $\Z$, and it acts on $\PP$ through
\begin{center}
\begin{tikzcd}[row sep=0.6pc, column sep=2.6pc]
\Z \times \PP \arrow[r] & \PP \\
 m \times z \arrow[r, mapsto] & f^m(z)
\end{tikzcd}
\end{center}
Recall the Jordan decomposition of $2\times2$ matrices, to wit:
\begin{rev}\label{theorem0}
Any $f \in PGL_2(\C)$ is conjugated by an element of
$ PGL_2(\C)$ to one of the following:
\begin{enumerate}[1)]
  \item $f(z)=z+\beta,\; \beta \in \mathbb{G}_a$ \;
  (if and only if \textit{f} has only one fixed point);
  \item $f(z)=\alpha z, \; \alpha \in \mathbb{G}_m$ \;
  (if and only if \textit{f} has two distinct fixed points).
\end{enumerate}
\end{rev}

\noi We want to characterize all meromorphic global
sections $q$ of $\Omega_{\PP}^{\otimes k}$ satisfying $f^*q=\lambda q$.
Since \textit{f} is an automorphism, for any $x \in \PP$ we have
$ord_x(q)=ord_{f(x)}(q)$, hence for any $k \in \Z$ the sets
$S_k=\{x\in \PP: ord_x(q)=k\}$ are completely invariant for the dynamics,
i.e. $f^{-1}(S_k)=S_k$.\\
From \ref{theorem0} we deduce easily that in Case 1)
the only finite set which is completely invariant for \textit{f}
is the fixed point $\infty$. We conclude that $\infty$ is the unique
pole of $q$, hence $q(z)=const \cdot dz^k$.
In Case 2) $q$ may have both poles and zeroes. If they are contained
in $\{0,\infty\}$ then $\displaystyle q(z)= const \cdot z^a \left( \frac{dz}{z}\right)^k$,
and the coefficient $a$ is determined by $ord_0(q)$.
Suppose that for some $k\in \Z$ we have
\begin{equation}\label{invariantset}
  S_k \neq \emptyset, \text{ and }  S_k \nsubseteq \{0,\infty\}
\end{equation}
Since for any $x \in S_k$ we have $\alpha^n x=x$ for some $n>1$,
there exists some minimal $n$ such that $f^n=id$.
It follows that $\alpha$ is a primitive $n$-th root of unity,
and as $f^*q=\lambda q$ we see easily that $\lambda=\alpha^j$ for some $j<n$.\\
Note that the action of $f: \mathbb{G}_m \to \mathbb{G}_m $ is a free action,
so the quotient map $p:  \mathbb{G}_m \to \mathbb{G}_m/f$ is canonically a $\mu_n$-torsor.\\
Define
\begin{equation}\label{explicit}
  Q(z)=z^j\left(\frac {dz}{z}\right)^k.
\end{equation}
and note that $Q \in H^0( \mathbb{G}_m, \Omega^{\otimes k})$.\\
Moreover $f^*Q=\lambda Q$, \textit{i.e.} $Q \in H^0(\mathbb{G}_m/f,\Omega^{\otimes k}\otimes L_{\lambda}^{-1})$ where $L_{\lambda}$ denotes the sheaf on $\mathbb{G}_m/f$ given by the action
on the trivial sheaf $A_f(f^*1)=\lambda$.\\
Thus ``multiplication by $Q$'' yields an $f$-invariant
isomorphism of sheaves on $\mathbb{G}_m$
$$
\OO \mathop{\longrightarrow}\limits^{\sim} \Omega^{\otimes k}\otimes L_{\lambda}^{-1}
$$

We deduce the following.
\begin{fact}
  Let $f$ satisfy condition \eqref{invariantset},
  then a meromorphic global section
  $q\in H^0(\mathbb{G}_m/f,\Omega^{\otimes k}\otimes L_{\lambda}^{-1}) $,
  i.e. a meromorphic $k$-th differential $q$ with $f^*q=\lambda q$,
  is necessarily of the form $q(z)=g(z^n)Q(z)$, where $g$ is a meromorphic function and $Q$ is given by \eqref{explicit}.
\end{fact}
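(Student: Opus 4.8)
The plan is to reduce everything to the descent of invariant meromorphic functions along the $\mu_n$-torsor $p:\mathbb{G}_m\to\mathbb{G}_m/f$ that was identified just above the statement. First I would record that the explicit section $Q$ of \eqref{explicit} is invertible on $\mathbb{G}_m$ as a section of $\Omega^{\otimes k}$: the differential $(dz/z)^k$ trivialises $\Omega^{\otimes k}$ over $\mathbb{G}_m$ and $z^j$ is an invertible regular function there, so $Q$ has neither zeros nor poles on $\mathbb{G}_m$. Consequently, for any meromorphic $q\in H^0(\mathbb{G}_m/f,\Omega^{\otimes k}\otimes L_\lambda^{-1})$ the quotient $h:=q/Q$ is a genuine meromorphic function on $\mathbb{G}_m$ (its divisor being exactly that of $q$).

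Next I would exploit the eigenvalue equations. As already computed, $f^*Q=(\alpha z)^{j}\big(d(\alpha z)/(\alpha z)\big)^{k}=\alpha^{j}z^{j}(dz/z)^{k}=\lambda Q$, while $f^*q=\lambda q$ by hypothesis; dividing gives $f^*h=(f^*q)/(f^*Q)=(\lambda q)/(\lambda Q)=h$, i.e. $h(\alpha z)=h(z)$. Since $\alpha$ is a primitive $n$-th root of unity, $h$ is therefore invariant under the entire group $\mu_n$ acting on $\mathbb{G}_m$ by multiplication.

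The crux — and the only step that needs an argument rather than a computation — is to conclude that a $\mu_n$-invariant meromorphic function on $\mathbb{G}_m$ is a meromorphic function of $z^n$. I would deduce this from the fact recalled before the statement, that $p$ is a $\mu_n$-torsor with $\mathbb{G}_m/f\cong\mathbb{G}_m$ and $p$ identified with $z\mapsto z^n$: the $\mu_n$-invariant part of $p_*\OO_{\mathbb{G}_m}$ is precisely $\OO_{\mathbb{G}_m/f}$ (equivalently $\C(z)^{\mu_n}=\C(z^n)$, the extension $\C(z)/\C(z^n)$ being Galois with group $\mu_n$), so $h$ descends along $p$ to a meromorphic function $g$ on $\mathbb{G}_m/f\cong\mathbb{G}_m$, that is $h(z)=g(z^n)$. (Analytically one may simply set $g(w)$ equal to the common value of $h$ on the fibre $\{z:z^n=w\}$, which is well defined by $\mu_n$-invariance and meromorphic because $w\mapsto z=w^{1/n}$ admits local holomorphic branches on $\mathbb{G}_m$.) Multiplying back by $Q$ gives $q(z)=g(z^n)Q(z)$, as asserted; conversely any such expression plainly satisfies $f^*q=\lambda q$, so this description is exact.
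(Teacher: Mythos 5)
Your proof is correct and takes essentially the same route as the paper: the paper's preceding observation that multiplication by $Q$ gives an $f$-invariant isomorphism $\OO \xrightarrow{\sim} \Omega^{\otimes k}\otimes L_{\lambda}^{-1}$ is exactly your division by the nowhere-vanishing section $Q$, after which the $\mu_n$-invariant function $h=q/Q$ descends along the torsor $z\mapsto z^n$ to give $g(z^n)$. You have simply made explicit the descent step that the paper leaves implicit.
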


\section{Dynamical systems on the Riemann sphere with a parallel tensor}
In the holomorphic category, a non-unit endomorphism of $\PP$
is a rational map $f: \PP \to \PP$ of degree $d>1$.
We denote by $\Omega_{\PP}$ the sheaf of holomorphic
differential forms on $\PP$, given by the canonical
action $dz \to f'(z)\,dz$, and by $\Omega_{\PP}^{\otimes k}$
its $k$-th tensor power. \\
Let us suppose from now on that \textit{f} verifies the following Assumption:
\begin{assumption}\label{assum}
 There exists $k\in \N^*$ and a global meromorphic section of
 $\Omega_{\PP}^{\otimes k}$, which we will denote by \textit{q},
 such that $f^*q=\lambda q$, for some $\lambda \in \C^*$ .
\end{assumption}
\noi Let `\textit{z}' be a local coordinate around a point
$x \in \PP$, we can write \textit{q} in the form $q=q(z)\,dz^k$,
where $q(z)$ is a meromorphic function of \textit{z}.
For any $y \in f^{-1}(x)$, let `\textit{s}' be the local
coordinate around \textit{y} such that the map \textit{f} in this coordinates takes the form $s \mapsto s^n$, where $n:=deg_y(f)$.
Within this notation we have $f^*q=q(s^n)\,(ns^{n-1}ds)^k$ and
it follows easily that
\begin{equation}\label{eq:1}
  ord_y(f^*q)=deg_y(f)(ord_x(q) +k) -k
\end{equation}.\\
Now \textit{\textbf{Assumption}} \ref{assum} clearly implies that $ord_y(f^*q)=ord_y(q)$, so we obtain
\begin{equation}\label{eq:2}
  \forall x \in \PP,  \qquad  ord_x(q)=deg_x(f)(ord_{f(x)}(q) +k) -k
\end{equation}

\subsection{Considerations on zeroes and poles}
In this section we are going to show that \eqref{eq:2} constrains the number of zeroes and poles of \textit{q}.
\begin{lemma}\label{lemma1}
  Let  $f: \PP \to \PP$ be a rational map of degree $d>1$ which satisfies \textbf{Assumption} \ref{assum}.\\ Then \textit{q} has no zeroes, that is the set $Z:=\{ x \in \PP : ord_x(q) >0 \}$ is the empty set.
\end{lemma}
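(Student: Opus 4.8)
The plan is to play the zero set $Z=\{x\in\PP:\,ord_x(q)>0\}$ against itself: I will show it is both finite and stable under taking preimages, and then extract from these two facts a periodic point of $f$ lying in $Z$, at which however \eqref{eq:2} pins the order of $q$ down to $-k$, a contradiction.

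First note that $f^{-1}(Z)\subseteq Z$. Indeed, if $f(x)\in Z$ then $ord_{f(x)}(q)\geq 1$, and since $deg_x(f)\geq 1$ and $k\geq 1$, \eqref{eq:2} gives
\[
ord_x(q)=deg_x(f)\bigl(ord_{f(x)}(q)+k\bigr)-k\;\geq\;(1+k)-k\;=\;1,
\]
so $x\in Z$. Because $q$ is a nonzero meromorphic section of the line bundle $\Omega_{\PP}^{\otimes k}$ on $\PP$, the set $Z$ is finite. Assume for contradiction that $Z\neq\emptyset$. Then $Z\supseteq f^{-1}(Z)\supseteq f^{-2}(Z)\supseteq\cdots$ is a decreasing chain of finite sets, each non-empty because every $f^{n}$ is surjective; hence it stabilises, and its stable value $E:=\bigcap_{n\geq 0}f^{-n}(Z)$ is a non-empty finite set, contained in $Z$, with $f^{-1}(E)=E$.

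Now I bound $|E|$. Since $f^{-1}(E)=E$ and preimages of distinct points are disjoint, $|E|=|f^{-1}(E)|=\sum_{p\in E}|f^{-1}(p)|$; combining this with the identities $|f^{-1}(p)|+\sum_{y\in f^{-1}(p)}\bigl(deg_y(f)-1\bigr)=d$ and summing over $p\in E$ gives $\sum_{y\in E}\bigl(deg_y(f)-1\bigr)=(d-1)\,|E|$, which together with the Riemann--Hurwitz bound $\sum_{y\in\PP}\bigl(deg_y(f)-1\bigr)=2d-2$ forces $|E|\leq 2$. Moreover $f(E)=f\bigl(f^{-1}(E)\bigr)=E$, so $f$ restricts to a permutation of the (at most two element) set $E$; pick $N\geq 1$ so that $f^{N}$ fixes each point of $E$. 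Choose $p\in E\subseteq Z$. I claim $p$ is totally ramified for $f^{N}$, i.e. $deg_p(f^{N})=d^{N}$: the set $f^{-N}(p)$ is non-empty and, by complete invariance, contained in $E$; if $|E|=1$ it is $\{p\}$, and if $E=\{p,p'\}$ then $p'\notin f^{-N}(p)$ since $f^{N}(p')=p'\neq p$, so again $f^{-N}(p)=\{p\}$. Either way $deg_p(f^{N})=d^{N}\geq 2$.

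Finally, iterating $f^{*}q=\lambda q$ gives $(f^{N})^{*}q=\lambda^{N}q$, so $f^{N}$ again satisfies \textbf{Assumption} \ref{assum} and \eqref{eq:2} applies to it. Evaluating it at the fixed point $p$ of $f^{N}$ yields
\[
ord_p(q)=deg_p(f^{N})\bigl(ord_p(q)+k\bigr)-k,\qquad\text{i.e.}\qquad ord_p(q)\,\bigl(1-d^{N}\bigr)=\bigl(d^{N}-1\bigr)k,
\]
hence $ord_p(q)=-k<0$, contradicting $p\in Z$. Therefore $Z=\emptyset$. The one step I expect to require care is the middle one: extracting from ``$Z$ finite with $f^{-1}(Z)\subseteq Z$'' a genuinely completely invariant non-empty subset, controlling its size by Riemann--Hurwitz, and checking that the chosen periodic point is totally ramified for the relevant iterate; the remainder is just manipulation of \eqref{eq:2}.
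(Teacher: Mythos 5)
Your proof is correct, but it follows a genuinely different route from the paper's. The paper also begins by observing $f^{-1}(Z)\subseteq Z$, but then argues globally with divisors: writing $\mathcal{Z}=\sum_{x\in Z}ord_x(q)\,x$, it compares $ord_y(f^*q)$ with $ord_y(f^*\mathcal{Z})$ via \eqref{eq:1} to get $deg(\mathcal{Z})\geq deg(f^*\mathcal{Z})=d\cdot deg(\mathcal{Z})$, forcing $deg(\mathcal{Z})=0$ since $d>1$. That argument is shorter, needs nothing beyond the multiplicativity of degree under $f^*$, and is reused almost verbatim to control the poles in Lemma \ref{lemma2}. You instead pass through the dynamical classification of completely invariant finite sets: you extract the stable core $E=\bigcap_n f^{-n}(Z)$, bound $\lvert E\rvert\leq 2$ by Riemann--Hurwitz (the standard ``exceptional set'' argument), locate a totally ramified periodic point in $Z$, and read off the contradiction $ord_p(q)=-k$ from \eqref{eq:2} applied to $f^N$. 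All the steps check out --- in particular the stabilisation of the decreasing chain of non-empty finite sets, the identity $\sum_{y\in E}(deg_y(f)-1)=(d-1)\lvert E\rvert$, and the fact that $(f^N)^*q=\lambda^N q$ lets you quote \eqref{eq:2} for the iterate. What your approach buys is extra structural information (any zero of $q$ would have to sit over an exceptional point of $f$, and in fact $ord_p(q)=-k$ at a totally invariant point, consistent with the orbifolds $(\infty,\infty)$ and $(2,2,\infty)$ found later); what it costs is reliance on Riemann--Hurwitz and a case analysis that the paper's one-line degree count avoids.
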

\begin{proof}
Let us define the divisor $\mathcal{Z}:= \sum\limits_{x \in Z} ord_x(q)\,x$ on $\PP$,
supported on the zeroes of \textit{q}.\\
Note that the statement of our Lemma is equivalent to
\begin{equation}\label{eq:lemma1}
  deg(\mathcal{Z}) =0
\end{equation}
We claim that \textit{Z} is a backward invariant set for the dynamics, i.e. $f^{-1}(Z) \subset Z$.
Given a zero $x \in Z$ of \textit{q} with $ord_x(q)=e>0$, we see from \eqref{eq:2} that for any
$y \in f^{-1}(x)$, setting $n:=deg_y(f)$, we have $ord_y(q)=ne + k(n-1)>0$, i.e. $y \in Z$.\\
Observe now that, from the definition of $f^*\mathcal{Z}$, we have
\begin{equation}\label{eq:lemma1bis}
  ord_y(f^*q)=ord_y(f^*\mathcal{Z}) + k(n-1) \geq ord_y(f^*\mathcal{Z})
\end{equation}
Thus summing \eqref{eq:lemma1bis} over all $y \in f^{-1}(Z)$, we obtain
\begin{equation}\label{eq:lemma1tris}
  \sum\limits_{y \in f^{-1}(Z)} ord_y(f^*q) \geq  \sum\limits_{y \in f^{-1}(Z)} ord_y(f^*\mathcal{Z})=deg(f^*\mathcal{Z})
\end{equation}
The left hand side of \eqref{eq:lemma1tris} is obviously less than
or equal to $$\sum\limits_{y \in Z} ord_y(f^*q)= \sum\limits_{y \in Z}
ord_y(q)=deg(\mathcal{Z})$$ since we are summing nonnegative numbers over a smaller set.\\
Finally we obtain $deg(\mathcal{Z}) \geq deg(f^*\mathcal{Z})=d\cdot deg(\mathcal{Z})$,
which implies \eqref{eq:lemma1} as we assumed $d>1$.
\end{proof}

\begin{lemma}\label{lemma2}
 Let  $f: \PP \to \PP$ be a rational map of degree $d>1$, which satisfies \textbf{Assumption} \ref{assum}.\\
 If $x \in \PP$ is a pole of \textit{q}, then
 \begin{equation}\label{eq:lemma2}
   -k \leq ord_x(q) \leq - \frac k 2
 \end{equation}
 \end{lemma}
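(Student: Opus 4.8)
The plan is to exploit the functional equation \eqref{eq:2} together with Lemma \ref{lemma1}, which already tells us $q$ has no zeroes, so every point of $\PP$ is either a pole of $q$ or a point where $\ord_x(q)=0$. First I would set $P:=\{x\in\PP : \ord_x(q)<0\}$ and observe, exactly as in the proof of Lemma \ref{lemma1} but with the inequalities reversed, that $P$ is \emph{forward} invariant in a strong sense: if $x\in P$ then \eqref{eq:2} forces $\ord_{f(x)}(q)+k\le 0$, i.e. $\ord_{f(x)}(q)\le -k<0$, so $f(x)\in P$. Thus $f(P)\subseteq P$, and since $P$ is finite (poles of a nonzero meromorphic section are isolated) $f$ permutes a nonempty finite subset of $P$, namely the eventual image $\bigcap_n f^n(P)$; in particular $P\neq\emptyset$ as soon as $q$ is not holomorphic, and every pole eventually lands on a periodic pole.

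Next I would prove the two inequalities separately. For the lower bound $\ord_x(q)\ge -k$: pick a pole $x$ of minimal order $m:=\min_{x\in\PP}\ord_x(q)<0$, attained at some point which, after the forward-invariance observation, we may take to be \emph{periodic}, say $f^p(x_0)=x_0$. Applying \eqref{eq:2} iteratively around the cycle, with $D:=\prod \deg(f)\ge 1$ the product of local degrees along the cycle, gives a relation of the shape $m = D(m+k\cdot(\text{something}\ge 1)) - k\cdot(\text{something}\ge 1)$; unwinding it, the only way the minimal value $m$ can reproduce itself is if $D=1$ (all local degrees along the cycle are $1$) and then \eqref{eq:2} at each point of the cycle reads $\ord_x(q)=\ord_{f(x)}(q)$, forcing $m+k\ge 0$, i.e. $m\ge -k$; since $m$ was the minimum, $\ord_x(q)\ge -k$ for every $x$. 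For the upper bound $\ord_x(q)\le -k/2$ at a pole: from \eqref{eq:2} write $\ord_x(q)+k = \deg_x(f)\big(\ord_{f(x)}(q)+k\big)$, so setting $e(x):=\ord_x(q)+k\ge 0$ (nonnegative by the lower bound just proved) we get the clean multiplicative law $e(x)=\deg_x(f)\,e(f(x))$. If $x$ is a pole then $\ord_x(q)\le -1$, but I need $\ord_x(q)\le -k/2$, equivalently $e(x)\le k/2$; if instead $e(x)>k/2$ then $\ord_x(q)>-k/2$, and I must derive a contradiction with $x$ being a pole at all — the point is that $e$ being multiplicative along forward orbits, combined with $0\le e\le k$ (upper bound $e\le k$ because $\ord_x(q)\le 0$ on $P\cup\{$ord $0\}$... actually $\ord_x(q)\le -1$ gives $e(x)\le k-1$), forces, along the eventual periodic cycle, $e$ to stabilize to a value fixed by multiplication by the local degrees; and a short counting argument over the whole sphere (summing $\ord_x(q)$ against $-2$ via $\deg(\div q)=2k(d-1)$... wait — $\div q$ has degree $-2k$ since $q\in H^0(\Omega^{\otimes k})$ has degree $2k\cdot(-1)$... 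I'll use $\deg(\mathrm{K}_{\PP}^{\otimes k})=-2k$) pins down the range.

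The cleanest route for the upper bound, which I would actually write, is the global one: since $\Omega_{\PP}^{\otimes k}$ has degree $-2k$ and $q$ has no zeroes (Lemma \ref{lemma1}), we have $\sum_{x\in P}\ord_x(q) = -2k$, so $\sum_{x\in P}\big(-\ord_x(q)\big)=2k$ and each term is at least $1$; this already bounds $\#P\le 2k$ but not yet the individual orders. To get $\ord_x(q)\le -k/2$ I would combine the multiplicative law $e(x)=\deg_x(f)e(f(x))$ with the Riemann--Hurwitz constraint on how much ramification $f$ can carry: summing $e(x)=\deg_x(f)e(f(x))$ appropriately and using that the total ramification is $2(d-1)$ forces, for any pole $x$, $e(x)\le k/2$, i.e. $\ord_x(q)\ge \ord_x(q)$... — the honest statement is that the argument parallels Lemma \ref{lemma1}'s degree bookkeeping but tracks the quantity $e(x)$ instead of $\ord_x(q)$. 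I expect the main obstacle to be precisely this upper bound: making the factor $1/2$ appear requires carefully pairing each pole with its preimages and invoking that $\sum_y(\deg_y(f)-1)=2(d-1)$ over all ramification, so that if some pole had $\ord_x(q)>-k/2$ the relation $e(x)=\deg_x(f)e(f(x))$ propagated backward would force too many zeroes or too much ramification, contradicting Lemma \ref{lemma1} or Riemann--Hurwitz. The lower bound, by contrast, is essentially forced by the periodicity-plus-minimality trick and should be short.
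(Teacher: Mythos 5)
Your plan has genuine gaps in both halves, and the second half is not a proof at all. For the lower bound, the reduction to a periodic pole is unjustified (the pole of minimal order need not lie on a cycle, and nothing in your forward-invariance observation shows that it does), and even granting periodicity the cycle computation gives nothing: composing \eqref{eq:2} around a cycle whose product of local degrees is $D$ yields $(m+k)=D(m+k)$, so either $m=-k$ or $D=1$, and in the latter case the relation $ord_x(q)=ord_{f(x)}(q)$ at each point of the cycle is vacuous --- it certainly does not ``force $m+k\ge 0$''. (Your preliminary claim that $x\in P$ forces $ord_{f(x)}(q)\le -k$ is also false; \eqref{eq:2} only gives $ord_{f(x)}(q)< k/deg_x(f)-k\le 0$.) The idea that actually works, and is what the paper does, runs backward rather than forward: if $ord_x(q)\le -k$ then every $y\in f^{-1}(x)$ satisfies $-ord_y(q)-k=deg_y(f)\,(-ord_x(q)-k)$, so the set $P_k=\{x:ord_x(q)\le -k\}$ is backward invariant and the effective divisor $\mathcal{P}_k=\sum_{x\in P_k}(-ord_x(q)-k)\,x$ satisfies $f^*\mathcal{P}_k\le\mathcal{P}_k$, whence $d\cdot deg(\mathcal{P}_k)=deg(f^*\mathcal{P}_k)\le deg(\mathcal{P}_k)$ and $deg(\mathcal{P}_k)=0$, i.e. no pole has order exceeding $k$.

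For the upper bound you explicitly defer the key step (``I expect the main obstacle to be precisely this upper bound''), and the Riemann--Hurwitz bookkeeping you gesture at is neither carried out nor the actual mechanism. The missing observation is local and elementary: if $x$ is a pole of order $m$ with $0<m<k/2$ and $y\in f^{-1}(x)$, then \eqref{eq:2} together with Lemma \ref{lemma1} (which gives $ord_y(q)\le 0$) forces $deg_y(f)\le k/(k-m)<2$, hence $deg_y(f)=1$ and $ord_y(q)=-m$. So the finite, nonempty set of poles of order exactly $m$ would be backward invariant with $f$ restricting to it as an unramified $d$-to-$1$ self-map of a finite set, which is absurd for $d>1$; formally, $deg(f^*\mathcal{P}_m)=d\,deg(\mathcal{P}_m)\le deg(\mathcal{P}_m)$ again. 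Both halves should be rewritten around this single divisor-degree mechanism; no global Riemann--Hurwitz count and no periodicity argument is needed.
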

\begin{proof}
We first prove the left inequality which is equivalent to
\begin{equation}\label{eq:lemma2bis}
  \{ x \in \PP: \; ord_x(q)  < -k \}=\emptyset
\end{equation}
In order to show this let us consider the following divisor on $\PP$
supported on $P_k=\{ x \in \PP: \; ord_x(q)  \leq -k \}$
$$
\mathcal{P}_k:= \sum \limits_{x \in P_k } (-ord_x(q) -k)\,x
$$
We claim that $deg(\mathcal{P}_k)=0$ from which
\eqref{eq:lemma2bis} follows immediately.\\
Note that equation \eqref{eq:2} implies that $P_k$
is a backward invariant set.
Moreover we have $-ord_y(q) -k= deg_y(f)(-ord_x(q)-k) =
ord_y(f^* \mathcal{P}_k)$, so that,
 \begin{align*}
    deg(\mathcal{P}_k)= \sum \limits_{x \in P_k} (-ord_x(q) -k)
    \geq & \sum \limits_{y \in f^{-1}(P_k) } (-ord_y(q) -k) = \\
     \sum \limits_{y \in f^{-1}(P_k) } ord_y(f^* \mathcal{P}_k)=
     deg(f^* \mathcal{P}_k)= & d \cdot deg(\mathcal{P}_k)
 \end{align*}
Thus just as $deg(\mathcal{P}_m) \geq 0$ and we assumed $d>1$,
it follows that $deg(\mathcal{P}_k)=0$.\\
Let us prove now the right hand side of \eqref{eq:lemma2}, i.e.
\begin{equation}\label{eq:lemma2tris}
  \{ x \in \PP : -\frac k 2 < ord_x(q) <0 \} = \emptyset
\end{equation}
Let $x \in \PP$ be a pole of \textit{q} of order $m:= -ord_x(q) \neq k$ and let $y \in f^{-1}(x)$.
Note that from \eqref{eq:lemma2bis} we have $m < k$
and also, from \ref{lemma1}, that $ord_y(q) \leq 0$. \\
Consequently, from
\eqref{eq:2} we obtain
\begin{equation}\label{bound}
  deg_y(f) \leq \frac{k}{k-m}
\end{equation}
Now, as $\displaystyle \frac{k}{k-m} <2$, $\forall\; 0<m< \frac k 2$, we deduce
that $deg_y(f)=1$. Thus we see again from \eqref{eq:2} that
the set of poles of \textit{q} of order $m$, with $0<m< \frac k 2$,
is backward invariant. \\
For any such integer $m$ let us consider the divisor on
$\PP$ given by
$$
\mathcal{P}_m:= \sum \limits_{x:\; ord_x(q) = -m} m\,x
$$
It follows from the considerations above that
$f^*\mathcal{P}_m \subset \mathcal{P}_m $,
and therefore \\
$deg(f^*\mathcal{P}_m) \leq deg(\mathcal{P}_m)$.
From $deg(f^*\mathcal{P}_m)= d\, deg(\mathcal{P}_m)$ and $d>1$,
we deduce that $deg(\mathcal{P}_m)=0$, which implies \eqref{eq:lemma2tris}.
\end{proof}

\subsection{Main Lemma: the dynamical system preserves a parabolic orbifold}
In this section we discuss the main consequence
of \textit{\textbf{Assumption}} \ref{assum},
\textit{i.e.} the existence of an orbifold or eventually an orbifold with boundary
that is preserved by \textit{f}.
We refer to \cite{MR1435975} and to \cite{MR1251582}
for a formal definition of an orbifold.
Observe that under our hypothesis, given any ramification point
$x\in Ram_f$, its image $f(x)$ is a pole of \textit{q},
since Lemma \ref{lemma1} and \eqref{eq:2} imply that
$ord_{f(x)}(q)< ord_x(q) \leq 0$, i.e  the order is decreasing.\\
Thus a map \textit{f} satisfying \textit{\textbf{Assumption}} \ref{assum}
must necessarily be a post-critically finite map,
\textit{i.e.} the post-critical set of \textit{f},
$\mathcal{P}_f=\bigcup\limits_{n>0}f^n\left(Ram_f\right)$ is finite.
\begin{definition}\label{def1}
Let $f:\PP \to \PP$ be a rational map of degree $d>1$.
We say that \textit{f} preserves an orbifold,
or also that \textit{f} lifts to a map of
orbifolds, if there exists a function $\nu:\PP \to \N^* \cup \{\infty\}$
satisfying the following conditions:

\begin{equation}\label{eq:3}
\begin{array}{ll}
&1)\nu(x)=1 \mbox{  if } x \notin P_f  \\
&2)\forall x \in  \PP,\, \nu(x) \mbox{ divides } \nu(y)deg_y(f) \mbox{ whenever } y \in f^{-1}(x)
\end{array}
\end{equation}
We shall denote by $\nu_f$ the smallest among all
functions $\nu$ satisfying condition \eqref{eq:3}.
Also, we denote by $\OO=(\PP, \nu_f)$ the orbifold
preserved by \textit{f} and we shall refer to it, for brevity's sake,
through the string $(\nu_f(x)\mathop{)}_{x \in P_f}$.
\end{definition}
By definition if $\nu_f$ takes the value $\infty$
we will say that $\OO$ is an orbifold with boundary, which
we will denote by $(\OO, D)$ \textit{i.e.}
$D \subset \OO$ is the set of singular points of weight $\infty$.
We can associate to $\OO$ its Euler characteristic
$$
\chi(\OO)=2-\sum\limits_{x\in P_f} \left(1-\frac{1}{\nu_f(x)}\right)
$$
which is well defined and extended in the obvious
sense if $\nu_f$ takes value $\infty$.
We shall call an orbifold $\OO$ \textit{hyperbolic}
if $\chi(\OO)<0$, \textit{parabolic} if $\chi(\OO)=0$ and \textit{elliptic} otherwise.
We enunciate here the main result of our work:

\begin{lemma}\label{mainlemma}
Let $f:\PP \to \PP$ be a rational map of degree $d>1$
which satisfies \textbf{Assumption} \ref{assum}.
Then \textit{f} preserves one of the following
parabolic orbifolds (resp. orbifold with boundary):

\begin{enumerate}[(i)]
  \item $(\infty,\infty)$
  \item $(2,2,\infty)$
  \item $(2,2,2,2)$
  \item $(3,3,3)$
  \item $(2,4,4)$
  \item $(2,3,6)$
\end{enumerate}
\end{lemma}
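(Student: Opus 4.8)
The plan is to show that the function $\nu_f$ attached to $f$ (as in Definition \ref{def1}) is forced by \textbf{Assumption} \ref{assum} to take a very restricted set of values, and then to feed this into a Riemann--Hurwitz count that pins down $\chi(\OO)=0$ together with the six configurations in the list. The starting point is equation \eqref{eq:2} together with Lemmas \ref{lemma1} and \ref{lemma2}: $q$ has no zeroes, and every pole $x$ has $-k\le \mathrm{ord}_x(q)\le -k/2$. First I would translate the pole orders into orbifold weights. Writing $m_x := -\mathrm{ord}_x(q)$ for a pole $x$ and $m_x := 0$ otherwise, relation \eqref{eq:2} becomes $k-m_x = \deg_x(f)\,(k - m_{f(x)})$, i.e. the quantity $k-m_x\in\{0,\dots,k/2\}$ is multiplied by the local degree under $f$. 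Setting $\nu(x) := k/(k-m_x)$ when $m_x<k$ and $\nu(x):=\infty$ when $m_x=k$, one checks directly that $\nu$ satisfies both conditions of \eqref{eq:3}: condition 1) because outside $P_f$ the order is non-decreasing along $f$ hence (being $\le 0$ everywhere and stabilising) must be $0$, so $m_x=0$ and $\nu(x)=1$; and condition 2) because $k-m_x=\deg_x(f)(k-m_{f(x)})$ reads exactly as $\nu(f(x))\mid \deg_x(f)\,\nu(x)$ after inverting. Moreover $\nu(x)$ is genuinely an integer in $\{1,2\}$-ish range: Lemma \ref{lemma2} gives $k-m_x\ge k/2$ for every pole, so $\nu(x)\in\{2,3,4,\dots\}\cup\{\infty\}$ but with the extra integrality constraint $(k-m_x)\mid k$ forced at the points where $\deg_x(f)=1$, and more generally along backward orbits. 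So $\OO=(\PP,\nu_f)$ is a well-defined orbifold preserved by $f$, with all finite weights dividing $k$.

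Next I would run the orbifold Riemann--Hurwitz formula. Since $f$ lifts to a self-map of the orbifold $\OO$, one has $\chi(\OO)=d\cdot\chi(\OO) - (\text{non-negative ramification term})$, and since $d>1$ this immediately forces $\chi(\OO)\ge 0$; the orbifold is elliptic or parabolic, never hyperbolic. To exclude the elliptic case $\chi(\OO)>0$ I would argue that an elliptic orbifold self-cover of degree $>1$ cannot be "balanced" in the way \eqref{eq:2} demands — more concretely, an elliptic orbifold has universal cover $\PP$ and the only degree-$>1$ self-maps come from $z\mapsto z^n$ type or finite-group quotients thereof, and one checks these never produce a parallel tensor with the pole constraint $m_x\ge k/2$; alternatively, and more cleanly, I would revisit the divisor-degree arguments of Lemmas \ref{lemma1}--\ref{lemma2}: the global residue/degree identity $\sum_x \mathrm{ord}_x(q) = -2k$ (degree of $\Omega_{\PP}^{\otimes k}$) combined with $\mathrm{ord}_x(q)\in[-k,-k/2]$ on the poles and $=0$ elsewhere bounds the number of poles between $2$ and $4$, and then the pole count together with the weight list $\nu_f(x)\in\{2,3,4,6,\infty\}$ leaves only finitely many numerical possibilities for $(\nu_f(x))_{x\in P_f}$ — exactly the classical list of parabolic triangle-type orbifolds $(\infty,\infty)$, $(2,2,\infty)$, $(2,2,2,2)$, $(3,3,3)$, $(2,4,4)$, $(2,3,6)$, all of which have $\chi=0$.

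The main obstacle I anticipate is the exclusion of the elliptic orbifolds $(\,)$, $(n,n)$, $(2,2,n)$, $(2,3,3)$, $(2,3,4)$, $(2,3,5)$ and the verification that the numerics genuinely collapse to the six parabolic cases rather than leaving room for a hyperbolic configuration that the crude degree count alone does not kill. The resolution is to use \eqref{eq:2} multiplicatively rather than additively: along any backward orbit the quantity $k - m_x$ gets multiplied by successive local degrees, so if some pole has $k-m_x < k$ (i.e. $\nu_f(x)>1$) then its preimages have strictly smaller values $k-m_y$ unless the local degree is $1$ there, which — chased all the way back through the (finite, post-critically finite) dynamics — forces a rigid divisibility pattern; combining this with $\sum_x m_x = 2k$ and $m_x\le k$ yields precisely that the weights are $2,3,4,6$ or $\infty$ and that $\sum_{x\in P_f}(1-1/\nu_f(x))=2$. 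I would present the elliptic exclusion as a short lemma: if $\chi(\OO)>0$ then $\OO$ is elliptic and any self-map of degree $>1$ would make the corresponding line bundle $\Omega_{\PP}^{\otimes k}(\lambda)$ of positive degree on the orbifold, contradicting the existence of the nowhere-zero holomorphic (not merely meromorphic — here Lemma \ref{lemma1} is essential) section $q$. Once hyperbolic and elliptic are both ruled out, $\chi(\OO)=0$ and the enumeration of parabolic orbifolds of $\PP$ is classical, giving exactly (i)--(vi).
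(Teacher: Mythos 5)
Your skeleton is essentially the paper's: you rewrite \eqref{eq:2} multiplicatively as $k-m_y=\deg_y(f)(k-m_x)$, set $\nu(x)=k/(k-m_x)$, and observe that $\sum_x m_x = 2k$ then forces $\sum_x(1-1/\nu_f(x))=2$, hence $\chi(\OO)=0$ and the classical list (i)--(vi). But there is a genuine gap exactly where the paper does all of its work: you never prove that $m_x/k$ can only take the values $1/2,\,2/3,\,3/4,\,5/6,\,1$, i.e.\ that $\nu(x)=k/(k-m_x)$ is an integer lying in $\{2,3,4,6,\infty\}$. Lemmas \ref{lemma1} and \ref{lemma2} only give $m_x\in[k/2,k]$, so a priori one could have, say, three poles of orders $(0.7k,0.7k,0.6k)$ or $(0.55k,0.65k,0.8k)$, which satisfy every constraint you have actually established. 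Your proposed resolution (``a rigid divisibility pattern chased through backward orbits'') is only a gesture: the divisibility $(k-m_x)\mid k$ comes from a preimage $y$ of $x$ with $m_y=0$ and $\deg_y(f)=k/(k-m_x)\in\N$, and you have not shown such a preimage exists for every pole. The paper closes this by genuinely dynamical arguments: the bound \eqref{eq:bound} on local degrees over a pole, the observation that a fiber over a pole consisting of $d$ unramified points would require $d$ distinct poles of equal order (impossible with at most $4$ poles in total), and a Riemann--Hurwitz count showing the image of the ramification of a degree $d>1$ map cannot be a single point. Without some version of these case-by-case exclusions your argument does not reach the six configurations.

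Separately, your Riemann--Hurwitz step has the inequality backwards. For a degree $d>1$ self-map of an orbifold one gets $(1-d)\chi(\OO)\ge 0$, i.e.\ $\chi(\OO)\le 0$: elliptic orbifolds are excluded for free, and it is the \emph{hyperbolic} case that must be ruled out --- which Riemann--Hurwitz alone cannot do, since generic post-critically finite maps preserve hyperbolic orbifolds. The ``elliptic exclusion lemma'' you propose addresses a non-problem, while the real obstruction (hyperbolicity, equivalently the integrality and boundedness of the weights) is precisely the unproved step above. Your other route to $\chi(\OO)=0$, via $\sum_x m_x=2k$, is correct and makes the Riemann--Hurwitz detour unnecessary, but only once the weight restriction has actually been established.
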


\begin{proof}

It is a fact that, for $q$ a meromorphic global section
of the sheaf $\Omega_{\PP}^{\otimes k}$, we have
\begin{equation}\label{eq:deg}
\sum\limits_{x \in \PP} ord_x(q)=  deg(\Omega_{\PP}^{\otimes k})=-2k
\end{equation}
We shall call $n_i$ the number of poles of \textit{q}
of order $i \geq0$, i.e. $n_i=\#\{x \in \PP : ord_x(q)=-i\}$.
Recall that in Lemma \ref{lemma1} and \ref{lemma2} we proved that
$n_i=0$ $\forall i <\frac{k}{2}$, hence setting $\alpha_i=\frac{i}{k}$,
equation \eqref{eq:deg} becomes:
\begin{equation}\label{eq:poles}
  \sum_{i=i_0}^{k} n_i \alpha_i =2
\end{equation}
where $i_0= \big \lceil \frac{k}{2}\big \rceil$ is
the smallest integer greater than $\frac{k}{2}$.\\
In the first pace we discuss the only two solutions
of \eqref{eq:poles} with $n_k \neq 0$.
One is $n_k=2$, which implies $n_i=0\;\forall i \neq k$,
while the other, which may occur only if $k$ is even, is $n_k=1$ and $n_{i_0}=2$.
We proved in \ref{lemma2} that the set of poles of \textit{q} of order
\textit{k} is a complete invariant set for the dynamics,
hence in the first case if we define
$$
\nu_f(x)=
\begin{cases}
  \infty, & \mbox{if } ord_x(q)=k;\\
  1, & \mbox{otherwise}.
\end{cases}
$$
it is clear that $\nu_f$ satisfies condition \eqref{eq:3},
i.e \textit{f} preserves an orbifold with boundary of type \textit{(i)}.\\
If the second case occurs, let $\infty$ be the pole of \textit{q} of order $k$,
which is a fixed point of \textit{f} and let $P=\{p_1,p_2\}$ be the other poles
of \textit{q}, necessarily of order $\frac{k}{2}$. \\
Recall from \eqref{bound} that given a pole $x$ of $q$ with $ord_x(q)=-n$ and given any $y\in f^{-1}(x)$, we have
\begin{equation}\label{eq:bound}
  deg_y(f)\leq \frac{1}{1- \alpha_n}
\end{equation}
Thus for any $y \in f^{-1}(p_i),\,i=1,2$
we have $deg_y(f) \leq 2$. From \eqref{eq:2}
we see that if $deg_y(f)=1$ then $y\in P$,
and if $deg_y(f)=2$ then $ord_y(q)=0$. We define
\begin{equation}\label{22inf}
\nu_f(x)=
\begin{cases}
  2, & \mbox{if } x \in P; \\
  \infty, & \mbox{if } x=\infty;\\
  1, & \mbox{otherwise}.
\end{cases}
\end{equation}
It follows from the discussion above that $\nu_f$
satisfies condition \eqref{eq:3}, so \textit{f}
preserves an orbifold with boundary of type \textit{(ii)}.\\
Our aim is now to show that any solution of \eqref{eq:poles}
with $n_k=0$ corresponds to one of the orbifold $(iii)-(vi)$ listed above.
Note that we must have $n_i\leq4, \;\forall i$ and we have $n_i=4$
for some \textit{i} if and only if \textit{k} is even and $i=\frac 1 2$.
In this case let $P=\{p_1,p_2,p_3,p_4\}$ be the set of poles of
\textit{q}, each pole having order $\frac k 2$ and let us define
\begin{equation}\label{2222}
  \nu_f(x)=
\begin{cases}
  2, & \mbox{if } x \in P; \\
  1, & \mbox{otherwise}.
\end{cases}
\end{equation}
As we have seen before, for any $y\in f^{-1}(P)$
the only possibilities for $deg_y(f)$ are 1 or 2,
meaning that respectively, $y\in P$ or $ord_y(q)=0$.
Hence $\nu_f$ satisfies condition \eqref{eq:3},
i.e. \textit{f} preserves the orbifold \textit{(iii)}.\\
Suppose now that in equation
\eqref{eq:poles} we have $n_i=3$ for some \textit{i},
then necessarily $\frac 1 2 <\alpha_i \leq \frac{2}{3}$.
Note that the latter inequalities cannot be strict,
since otherwise there would exist an index $j\neq i$ such that $n_j\neq 0$
and consequently $0<n_j \alpha_j=2 - 3\alpha_i < \frac 1 2$ which is impossible.
Therefore $n_i=3$ for some \textit{i} if and only if $\alpha_i=\frac{2}{3}$
(note that it makes sense only if $k\equiv 0\;(mod\, 3)$).
In this case let $P=\{p_1,p_2,p_3\}$ be the set of poles of \textit{q},
each of order $\frac 2 3 k$ and let us define
 \begin{equation}\label{333}
 \nu_f(x)=
\begin{cases}
  3, & \mbox{if } x \in P; \\
  1, & \mbox{otherwise}.
\end{cases}
\end{equation}
It follows from \eqref{eq:bound} that for any $y\in f^{-1}(P)$,
$deg_y(f) \leq 3$, but $deg_y(f)=2$ implies $ord_y(q)=-\frac k 3 $
which is impossible, hence the only possibilities for the local degree
of $f$ at $y$ are 1 or 3, implying that respectively, $y\in P$ or $ord_y(q)=0$.
We conclude that $\nu_f$ satisfies condition \eqref{eq:3},
i.e. \textit{f} preserves an orbifold of type \textit{(iv)}.\\
Suppose now that $n_i=2$ for some $i \neq k, \frac k 2$,
then we have necessarily $\frac 1 2< \alpha_i \leq \frac 3 4$.
In fact, note that if $n_i=2$ then there is only another nonzero $n_j$ solving \eqref{eq:poles}, hence
$n_j \alpha_j=2(1-\alpha_i)$ and consequently, if $\alpha_i > \frac 3 4$,
we should have $n_j \alpha_j < \frac 1 2$ which is impossible.
Note that $\alpha_i\neq \frac{2}{3}$, otherwise we would have $n_i=3$. \\
We claim that there are no solutions of \eqref{eq:poles} with $n_i=2$ and
$\frac 1 2< \alpha_i < \frac 3 4$.\\
In fact, in this case \eqref{eq:poles} implies
that \textit{q} has exactly three poles, two of which having order $\alpha_i k$ and one having order
$\alpha_j k=2(1-\alpha_i)k$. However from \eqref{eq:bound}
we deduce that $deg_y(f)\leq3$ for any $y$ in the fiber of the poles of order $\alpha_i k$.
Clearly $deg_y(f)$ cannot be equal to 2 or 3, otherwise
we would have, respectively, $-ord_y(q) /k= 2 \alpha_i -1<\frac 1 2$ or
$-ord_y(q) /k= 3 \alpha_i -2<\frac 1 4$, which is impossible.
Therefore we obtain $deg_y(f)=1$, from which we deduce that the image of the ramification of \textit{f} consists of the other pole of \textit{q} (recall that \textit{f} maps any ramification point to a pole of \textit{q}).
The following simple computation shows that the image of
the ramification of a rational map of degree $d>1$ cannot
consist of one point. Let \textit{p} be this point and suppose that $f^{-1}(p)=e_1x_1+\dots+e_rx_r+y_1+\dots+y_s$, with $e_1+\dots+e_r+s=d$.
By the Riemann-Hurwitz formula, the ramification of a rational map of degree
\textit{d} has order $2d-2$, hence we obtain $2d-2=(e_1-1)+\dots+(e_r-1)=d-(r+s)$,
which is absurd since $d>1$.\\
Therefore we conclude that $n_i=2$ for some
$i \neq k, \frac k 2$ if and only if $\alpha_i=\frac 3 4 $
(note that it makes sense only if $k \equiv 0 \;(mod \,4)$). \\
In this case let $P=\{p_1,p_2\}$ be the set  poles of \textit{q}
of order $\frac 3 4 k$ and let \textit{p} be the other pole of
\textit{q}, necessarily of order $\frac k 2$. It is natural to define
\begin{equation}\label{244}
 \nu_f(x)=
\begin{cases}
  2, & \mbox{if } x=p; \\
  4, & \mbox{if } x \in P;\\
  1, & \mbox{otherwise}.
\end{cases}
\end{equation}
We already know that $\nu_f$ satisfies condition \eqref{eq:3} for $x=p$,
so we are left to show it holds also for $x\in P$.
In view of \eqref{eq:2}, given any $y \in f^{-1}(P)$,
the possible values for $deg_y(f)$ are 1, 2 or 4, meaning that
$\nu_f(y)$ is, respectively, equal to 4, 2, 1.
Thus we have $\nu_f(y)deg_y(f)=4$ in each case, so we conclude that \textit{f}
preserves an orbifold of type \textit{(v)}.\\
Finally we suppose that $n_i\leq 1,\;\forall i$.
It is clear that in this case we must have $\#\{i:n_i\neq 0\}=3$,
so we can rewrite equation \eqref{eq:poles}
in the form $\alpha + \beta + \gamma =2$,
with $\alpha,\beta,\gamma \in \Q$ satisfying $\frac 1 2 \leq \alpha<\beta<\gamma<1$.
We claim that this equation has only one solution,
which is $\alpha=\frac 1 2, \beta= \frac2 3 , \gamma= \frac5 6 $
(note that it makes sense only if $k \equiv0\;(mod\,6)$).
Suppose that $\alpha \neq \frac 1 2$ and observe that $\alpha<\frac 2 3$.
Thus, from \eqref{eq:bound},
we deduce that for any \textit{y} in the
fiber of the pole of order $\alpha k$ we have $deg_y(f)<3$.
Nevertheless, $deg_y(f)$ cannot be 2, since otherwise
we should have $-ord_y(q)/k<\frac 1 3$, which is impossible.
We conclude that the fiber of this pole must consist of exactly $d$
non-ramified different points, say $\{y_1,\dots,y_d\}$,
and for each of these points we should have $ord_{y_i}(q)=-\alpha k$,
but this leads to an absurd, since there is only one pole of such order.
We have reduced our equation to $\beta+\gamma=\frac 3 2 $,
with $\frac 1 2<\beta<\gamma<1$, but now the same argument
used for $\alpha$ shows that this is possible if and only if $\beta=\frac 2 3 $,
since otherwise the pole of order $\beta k$ would not be a branched point,
which leads us to an absurd. Calling $p_1,p_2,p_3$ the poles of \textit{q}
of order, respectively, $\alpha k,\beta k,\gamma k$, we define
\begin{equation}\label{236}
 \nu_f(x)=
\begin{cases}
  2, & \mbox{if } x=p_1; \\
  3, & \mbox{if } x=p_2;\\
  6, & \mbox{if } x=p_3;\\
  1, & \mbox{otherwise}.
\end{cases}
\end{equation}
We already know that $\nu_f$ satisfies condition \eqref{eq:3}
for $x=p_1,p_2$, so we need only to show that it holds also
for $p_3$. For any $y\in f^{-1}(p_3)$ the possible values for
$D=deg_y(f)$ are $D=1,2,3,6$ since from equation \eqref{eq:2}
we have $-ord_y(q)/k=1-\frac D 6$ and this can only be equal
to $0,\frac 1 2, \frac 2 3, \frac 5 6$.
It follows that $\nu_f(y)deg_y(f)=6$ in each case, so $\nu_f$ satisfies
condition \eqref{eq:3}, i.e.
\textit{f} preserves an orbifold of type \textit{(vi)}.
\end{proof}

\begin{remark}\label{remark1}
  We have shown that every solution of \eqref{eq:poles}
  with $\alpha_i \neq 1$ is such that
  $ \alpha_i=\left(1-\frac 1 n\right)$ for some
  $n=2,3,4 \mbox{ or } 6$, \textit{i.e.} $q$ may only have
  poles of order $k$ or $ \left(1-\frac 1 n\right)k$. \\
  In the latter case, observe that we can write equation \eqref{eq:poles} as
  \begin{equation}\label{eq:101}
    \sum\limits_{i \in I} \left(1- \frac{1}{e_i}\right)=2
  \end{equation}
  where $e_i$ are not necessarily distinct integers. \\
  Moreover observe that we have defined in each case
  $$
  \nu_f(x)=n \; \; \mbox{whenever  } ord_x(q)=-\left(1-\frac 1 n\right)k
  $$
  for any $n=1,2,3,4,6$, in such a way that
  $$
  \sum\limits_{x \in \PP} \left(1- \frac{1}{\nu_f(x)}\right)=2
  $$
  We conclude that the orbifolds \textit{(iii)-(vi)}
  of Lemma \ref{mainlemma} are also all the possible parabolic orbifolds on $\PP$. \\

\end{remark}

\section{Maps preserving a parabolic orbifold}
In this chapter we will discuss the main consequences
of Lemma \ref{mainlemma}. We have a rational map
$f:\PP \to \PP$ that has an invariant orbifold $\OO$,
i.e. there exists a map $\tilde{f}:\OO \to \OO$ such
that the following diagram commutes
\begin{equation}\label{diag1}
\begin{tikzcd}[column sep=2.5pc,row sep=2pc]
\OO \arrow{r}{\tilde{f}} \arrow{d}[swap]{p} & \OO \arrow{d}{p} \\
\PP \arrow{r}{f}   & \PP
\end{tikzcd}
\end{equation}
(here \textit{p} denotes the natural projection map),
with $\chi(\OO)=0$.\\
We will show that $\OO$ is the quotient of an elliptic curve
\textit{E} by the action of a (finite) group $G$ of automorphisms
of $E$ and that \textit{f} lifts to a morphism of $G$-torsors
$F: E \to E$, such that the following diagram commutes:
\begin{equation}\label{diag2}
\begin{tikzcd}[column sep=2.5pc,row sep=2pc]
{}      & E \arrow{r}{F} \arrow{d}{\pi} \arrow{ldd}[swap]{p\circ\pi}& E \arrow{d}[swap]{\pi} \arrow{rdd}{p\circ\pi}& \\
{}      & \OO \arrow{r}{\tilde{f}} \arrow{ld}{p}     & \OO  \arrow{rd}[swap]{p}   & \\
\PP \arrow{rrr}{f} &                      &           &    \PP
\end{tikzcd}
\end{equation}

\subsection{Construction of a torsor associated to a torsion line bundle}
We begin recalling some general facts concerning line bundles
over an orbifold $\OO$. We say that $\pi:L \to \OO$ is a torsion
line bundle of order \textit{n} if $L^{\otimes n}$ is trivial,
i.e. if there exists an isomorphism of line bundles on $\OO$
\begin{equation}\label{diag:trivial}
\begin{tikzcd}[column sep=2.5pc,row sep=2pc]
L^{\otimes n} \arrow[d,swap, "\pi"] \arrow[r, "\sim"] & \OO \times \C \arrow{ld}{p_1} \\
\OO &
\end{tikzcd}
\end{equation}
where $p_1$ denotes the projection on the first factor.\\
It is well known that a torsion line bundle over a compact manifold $X$
defines  a $\mu_n$-torsor over $X$, which is unique up to
an almost unique isomorphism, {\it i.e.} rather than
being unique, the isomorphisms
between any two such torsors form a principal homogeneous
space under $\mu_n$.
We are going to prove that this property still holds if $X$
is an orbifold whose underlying space is $\PP$.\\
Let us consider an orbifold $\OO$ modelled on $\PP$ whose
set of singular points is $\{x_1, \dots,x_r\} \subset \PP$,
each $x_i$ having finite weight $n_i$. This means that the
monodromy group of each $x_i$ is the group of $n_i$-th roots
of unity $\mu_{n_i}$ and arbitrarily small (non-space like)
neighborhood $U_i$ of $x_i$ in $\OO$ are described as follows:
we can choose a disk $\Delta_i$ centered at the origin such that
$U_i$ is the classifying champ, $[\Delta/\mu_{n_i}]$,
\cite[2.4.2]{3540657614}, for the action
$\Delta_i \leftleftarrows \Delta_i \times \mu_{n_i}$ of
$\mu_{n_i}$  by rotations on $\Delta_i$. \\
Let us recall that a line bundle \textit{L} on $\OO$ can be
described on  $U_i=\left[\Delta_i/\mu_{n_i}\right]\ni x_i$ as follows:
we have that $L\mathop{|}_{\Delta_i}$ is the trivial bundle, with action
determined by a representation $\rho_i: \mu_{n_i} \to \C^*$.
Note that $L\mathop{|}_{U_i}^{\otimes n_i}$ is the trivial bundle on
$U_i$ since such a representation has order dividing $n_i$.\\
If we set $Z=\coprod\limits_i p^{-1}(x_i)$ and $U=\OO \setminus Z$,
then \textit{L} is completely determined by the triple
\begin{equation}\label{triplet}
  \left(L\mathop{\mid}\nolimits_{Z}, L \mathop{\mid}\nolimits_{U}, \phi \right)
\end{equation}
where we have $L\mathop{\mid}_{Z}=\prod\limits_i (\rho_i: \mu_{n_i} \to  \C^*)$
and $\phi=\prod\limits_i \phi_i$, each
$\phi_i:L\mathop{\mid}_{U_i\setminus \{x_i\}} \to L\mathop{\mid}_{U} $ being
the gluing map of the bundle \textit{L}.\\
We have that \textit{L} is torsion of order $n$ if and only if
$L^{\otimes n}$ defines a line bundle on $\PP$ of degree 0,
since the following sequence
\begin{equation}\label{pic}
\begin{tikzcd}[column sep=2.5pc,row sep=2.5pc]
0 \arrow[r] & Pic(\PP) \arrow[r] & Pic(\OO) \arrow[r] & \prod\limits_i (\rho_i: \mu_{n_i} \to  \C^*)
\end{tikzcd}
\end{equation}
is exact.
Indeed if each representation $\rho_i$ is trivial, then
$L\mid_{U_i}$ is trivial so the maps $\phi$ are just
gluing with the trivial line bundle on the moduli of
$U_i$, {\it i.e.} the naive quotient of the $\mu_{n_i}$
action, identified with open subset of $\PP$, and so
the kernel is a line bundle on $\PP$.
We know, however, that $deg: Pic(\PP) \mathop{\to}\limits^{\sim} \Z$
is an isomorphism, so if $deg(L)=0$ and the order of
each representation $\rho_i$ divides $n$,
then from \eqref{triplet} and \eqref{pic} it follows
that \textit{L} is torsion of order dividing \textit{n}.

\begin{lemma}\label{torsor}
Let $\pi:L \to \OO$ be a $n$-torsion line bundle over an
orbifold $\OO$ whose underlying space is compact,
then associated to $L$ there is
a unique, up to isomorphism, $\mu_n$ torsor
$\mathcal{E} \subset L\xrightarrow{\pi} \OO$.
Better still the singular points of $\mathcal{E}$
lie over those of $\OO$, and if the underlying
space of $\OO$ is $\PP$ with $y$ a singular point
of $\mathcal{E}$ lying, in the above notation
\eqref{triplet},
over $x_i$ then the local monodromy of $\mathcal{E}$
at $y$ is the kernel of $\rho_i$.
\end{lemma}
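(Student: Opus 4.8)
The plan is to build the torsor $\mathcal{E}$ from the local data \eqref{triplet} by trivializing $L^{\otimes n}$ and taking the obvious fiberwise description, then check that the $\mu_n$-action, the singular locus, and the monodromy all come out as claimed. Recall that for a torsion line bundle $L$ on a \emph{manifold} $X$ with chosen isomorphism $\psi: L^{\otimes n}\xrightarrow{\sim}\OO_X$, one sets $\mathcal{E}=\{v\in L : \psi(v^{\otimes n})=1\}$; this is a $\mu_n$-torsor over $X$, and changing $\psi$ by a scalar changes $\mathcal{E}$ by an isomorphism that is well-defined up to $\mu_n$, which is exactly the "almost unique" clause. So first I would carry out this construction over the open orbifold $U=\OO\setminus Z$, where $U$ is an honest (possibly non-compact) Riemann surface, obtaining a $\mu_n$-torsor $\mathcal{E}_U\to U$ together with its gluing data; uniqueness-up-to-almost-unique-isomorphism over $U$ follows from the manifold case applied to (a smooth compactification, or directly to) $U$, using that $H^1$ with $\mu_n$-coefficients controls such torsors and the relevant obstruction/ambiguity group is $\mu_n$.

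Next I would treat each singular point $x_i$ separately. On the neighborhood $U_i=[\Delta_i/\mu_{n_i}]$ the bundle $L$ is given by the representation $\rho_i:\mu_{n_i}\to\C^*$ on the trivial bundle $\Delta_i\times\C$, and $L^{\otimes n}$ is trivial there (the order of $\rho_i$ divides $n_i$, hence divides $n$). Pick the trivialization of $L^{\otimes n}|_{U_i}$ compatible, via $\phi_i$, with the one already chosen on $U$. Then $\mathcal{E}|_{U_i}$ is the sub-orbifold of $\Delta_i\times\C$ cut out by the $n$-th-power condition; concretely it is $\mu_{n_i}$ acting on $\Delta_i\times\mu_n$ (with the $\mu_{n_i}$-action on the second factor through $\rho_i$) together with the residual $\mu_n$-torsor structure. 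The underlying coarse space of this is $\Delta_i\times\mu_n$ modulo the combined action, which is a disjoint union of $[\mu_{n_i}:\ker\rho_i]$-many copies of $\Delta_i$; the points lying over $x_i$ therefore carry residual monodromy exactly $\ker\rho_i$. This is the content of the last two sentences of the statement. I would then glue $\mathcal{E}_U$ and the $\mathcal{E}|_{U_i}$ along $U_i\setminus\{x_i\}$ using the gluing maps $\phi_i$ (which automatically respect the $n$-th-power conditions by the compatible choice of trivializations), producing the global $\mu_n$-torsor $\mathcal{E}\to\OO$ sitting inside $L$.

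For uniqueness, suppose $\mathcal{E},\mathcal{E}'\subset L$ are two such torsors. Their restrictions to $U$ are two $\mu_n$-torsors inside $L|_U$ arising from trivializations of $L^{\otimes n}|_U$ that agree up to a scalar; hence they are isomorphic by a map unique up to $\mu_n$. Such an isomorphism over $U$ automatically extends over each $x_i$ because the local pictures $\mathcal{E}|_{U_i}$, $\mathcal{E}'|_{U_i}$ are determined up to canonical isomorphism by $\rho_i$ alone (the only choice, a scalar in the trivialization of $L^{\otimes n}|_{U_i}$, again produces isomorphisms forming a $\mu_n$-torsor), and these are compatible with the gluing. Patching the local isomorphisms with the one over $U$ gives the desired global isomorphism, unique up to the global constant $\mu_n$. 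Here I would invoke the exactness of \eqref{pic} to know that when each $\rho_i$ has order dividing $n$, the global object $L^{\otimes n}$ is a degree-$0$ bundle on $\PP$, hence trivial, so a global trivialization $\psi$ exists to begin with — this is what lets the construction go through globally rather than only locally.

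The main obstacle is the patching over the singular points: one must check that the naive fiberwise "$n$-th power equals $1$" locus, which is transparent on a manifold, interacts correctly with the stacky structure $[\Delta_i/\mu_{n_i}]$, and in particular that the residual monodromy of a point of $\mathcal{E}$ over $x_i$ is precisely $\ker\rho_i$ and not some larger or smaller subgroup. Concretely this amounts to analyzing the $\mu_{n_i}$-equivariant structure of $\Delta_i\times\mu_n$ where $\mu_{n_i}$ permutes the $\mu_n$-factor through the homomorphism $\mu_{n_i}\xrightarrow{\rho_i}\mu_{|\mathrm{im}\,\rho_i|}\hookrightarrow\mu_n$: the stabilizer of a point in the $\mu_n$-factor is exactly $\ker\rho_i$, so $\mathcal{E}|_{U_i}$ is a disjoint union of orbifold disks each with monodromy $\ker\rho_i$. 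Once this local computation is pinned down, gluing and the uniqueness statement are formal, following the compact-manifold case verbatim with $\mu_n$ playing the role of the ambiguity group throughout.
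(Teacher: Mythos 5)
Your construction of $\mathcal{E}$ as the locus in $L$ where a trivialization of $L^{\otimes n}$ takes a fixed nonzero value, together with the identification of the local monodromy over $x_i$ as the stabilizer $\ker\rho_i$ of the diagonal $\mu_{n_i}$-action on $\Delta_i\times\mu_n$, is essentially the paper's own argument (the paper packages existence and uniqueness via the Kummer sequence $0\to\mu_n\to\mathbb{G}_m\to\mathbb{G}_m\to 0$ and its long exact sequence rather than by comparing trivializations, but the content is the same, and its local model $F_n^{-1}(\Delta\times\lambda)\cong\Delta\times n^{-1}(\lambda)$ with diagonal action is exactly yours). One immaterial slip: the coarse fiber of $\mathcal{E}$ over $x_i$ consists of $n/[\mu_{n_i}:\ker\rho_i]$ points rather than $[\mu_{n_i}:\ker\rho_i]$, though this does not affect the monodromy claim, which is all the lemma asserts.
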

\begin{proof}
From the exact sequence of sheaves on $\OO$
\begin{equation}\label{M101}
0\rightarrow \mu_n \rightarrow \mathbb{G}_m \xrightarrow{n}
\mathbb{G}_m
\rightarrow 0
\end{equation}
the long exact sequence in co-homology, since the
underlying space is compact, reads
\begin{equation}\label{M102}
0\rightarrow \mu_n \rightarrow \mathbb{C}^* \xrightarrow{n}
\mathbb{C}^*
\rightarrow H^1(\OO,\mu_n) \rightarrow Pic(\OO)
\xrightarrow{n} Pic(\OO)
\end{equation}
so, isomorphism classes of $\mu_n$ torsors are
exactly $n$-torsion line bundles.
In order to compute the singular points of $\mathcal{E}$,
we recall how to construct the torsor starting from the bundle.

In the first pace,  given a vector bundle $p:E \to \OO$ we can
form the tensor power of \textit{E}, $p':E^{\otimes n} \to \OO$
and we have a canonical map of bundles
\begin{equation}
\begin{tikzcd}[column sep=2.5pc,row sep=2pc]
E \arrow{r}{F_n} \arrow{d}[swap]{p} & E^{\otimes n} \arrow{ld}{p'} \\
\OO   &
\end{tikzcd}
\end{equation}
where the map $F_n$ sends an element $e \in E$ to its tensor
power $e^{\otimes n} \in E^{\otimes n}$.
Therefore if \textit{L} is a torsion line bundle of order
\textit{n} we have the following commutative diagram:
\begin{equation}\label{Michael1}
\begin{tikzcd}[column sep=2.5pc,row sep=2pc]
L \arrow{r}{F_n} \arrow{d}[swap]{\pi} & \OO \times \C \arrow{ld}{p_1} \\
\OO    &
\end{tikzcd}
\end{equation}
and for  any $\lambda \in \C^*$
our $\mu_n$ torsor is isomorphic to
$\mathcal{E}=F_n^{-1}(\OO \times \lambda)$.

To compute the monodromy at $y\mapsto x_i$
should $\OO$ have underlying space $\PP$, observe that
around $x_i$, \eqref{Michael1} corresponds to the map
of groupoids,
\begin{equation*}
\begin{tikzcd}[column sep=2.5pc,row sep=2pc]
(x,n(v)):=(x,v^n)  & \Delta \times \C & \arrow[l, shift left, "action"] \arrow[l, shift right, swap, "trivial"] \Delta \times \Gamma_p \times \C   \\
(x,v) \arrow[u, mapsto] & \Delta \times \C  \arrow{u}  & \arrow[l, shift left, "action"] \arrow[l, shift right, swap, "diagonal"]\Delta \times \Gamma_p \times \C \arrow{u}
\end{tikzcd}
\end{equation*}
in which the diagonal action is $(x,v)\to (x^{\gamma}, \rho_i(\gamma)v)$.

Hence we see that $F_n^{-1}(\Delta \times \lambda)$ is isomorphic to
the $\mu_n$ torsor $\Delta \times n^{-1}(\lambda)$ with the
stabilizer $\Gamma_i$ of $x_i$ acting diagonally, {\it i.e.}
$(x,l) \to (x^{\gamma}, \rho_i(\gamma)l)$, where $x \in \Delta$ and $l \in n^{-1}( \lambda)$.\\
We have that $\gamma \in Stab(x \times l) \Leftrightarrow x^{\gamma}=x \mbox{ and } \rho_p(\gamma)l=l$, which gives $x \in \Gamma_i$ and $\gamma \in \ker(\rho_p)$.
\end{proof}

\subsection{Holomorphic differentials on a parabolic orbifold}
In this section we show how the construction of Lemma \ref{torsor}
applies to the line bundle $\Omega_{\OO}$ of holomorphic differential
forms over a parabolic orbifold.\\
Around a non-space like point $x_i$, in the above notation \eqref{triplet},
we have that $\Omega_{\OO}\mathop{|}_{U_i}$ is the $\mu_{n_i}$-module
$\OO\mathop{|}_{\Delta_i}dz$, where $\OO\mathop{|}_{\Delta_i}$
denotes the sheaf of holomorphic functions on the disk, with action given by
\begin{equation}\label{diff}
 f(z)dz \to f(\theta z)\theta dz, \;\theta \in \mu_{n_i}
\end{equation}
Around a boundary point of $\OO$, \textit{i.e.} a singular point
with weight $\infty$, there is no orbifold structure, but still morally,
if not mathematically, the monodromy group of such a point is isomorphic to $\Z$. \\
We will denote by $(\OO, D)$ an orbifold $\OO$ with boundary $D$, \textit{i.e.}
$D \subset \OO$ is the set of singular points of weight $\infty$,
and by $\Omega_{\OO}(\log D)$ the sheaf of holomorphic differential
forms on $\OO$ with logarithmic poles on the boundary \textit{D}
(see \cite{3540051902}).
Moreover we will use the notation $f: (\OO_1,D_1) \to (\OO_2,D_2)$
for a map between orbifolds with boundary, meaning as usually that $f^{-1}D_2\subseteq D_1$.

\begin{lemma}\label{lemma:trivial}
Let $\OO=(\PP,\nu_f)$ (resp. $(\OO,D)=(\PP,\nu_f)$)
be a parabolic orbifold (resp. orbifold with boundary)
invariant for \textit{f} as in Lemma \ref{mainlemma},
and let \textit{q} the meromorphic section of
$\Omega_{\PP}^{\otimes k}$ such that $f^*q\,//\,q$. We have:
\begin{enumerate}
  \item $\Omega_{\OO}$ (resp. $\Omega_{\OO}(\log D)$)
  is a torsion line bundle of order $n:=lcm\{\nu_f(x): x \notin D\}$.
  \item If we denote by $p:\OO \to \PP$ the natural projection, then $\tilde{q}:=p^*q\in H^0(\Omega_{\OO}^{\otimes k})$
(resp. $H^0(\Omega_{\OO}(\log D)^{\otimes k})$
\end{enumerate}
\end{lemma}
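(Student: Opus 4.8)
The plan is to analyze the line bundle $\Omega_{\OO}$ locally, separating the contributions of the interior orbifold points from the boundary, and to match the resulting local monodromy representations against the explicit orders of the poles of $q$ established in Lemma \ref{mainlemma} and Remark \ref{remark1}. First I would treat part (1). Away from the singular locus, $p$ is étale, so $\Omega_{\OO}$ agrees with $\Omega_{\PP}$ there; the only subtlety is local. Around an interior singular point $x_i$ of weight $\nu_f(x_i)=m$, formula \eqref{diff} shows that $\Omega_{\OO}\mid_{U_i}$ is the trivial bundle on $\Delta_i$ with $\mu_m$ acting through the character $\theta\mapsto\theta$, i.e. the representation $\rho_i:\mu_m\hookrightarrow\C^*$ is the standard inclusion, which has order exactly $m$. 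By the discussion surrounding \eqref{triplet} and \eqref{pic}, $\Omega_{\OO}$ is torsion of order dividing $n$ precisely when every $\rho_i$ has order dividing $n$ and $\Omega_{\OO}^{\otimes n}$ descends to a degree-zero line bundle on $\PP$; the first condition holds by construction of $n=\mathrm{lcm}\{\nu_f(x)\}$, and for the second I would compute the degree. In the boundary-free cases (iii)–(vi) the orbifold canonical bundle has orbifold-degree $-\chi(\OO)=0$, and one checks that $\Omega_{\OO}^{\otimes n}$, being a genuine line bundle on $\PP$, has degree $n\cdot(-\chi(\OO))=0$ summed against the fractional contributions — more concretely, $\deg_{\PP}\Omega_{\OO}^{\otimes n}=-2n+\sum_i n(1-1/\nu_f(x_i))=n(-2+\sum_i(1-1/\nu_f(x_i)))=n\cdot(-\chi(\OO))$ — hence is $0$ exactly because $\chi(\OO)=0$. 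For the boundary cases (i) and (ii) the same computation applies to $\Omega_{\OO}(\log D)$, where the logarithmic poles at the boundary contribute $+1$ at each $D$-point and $D$ is removed from the set defining $n$; again the total degree is $n(-\chi(\OO))=0$. Invoking \eqref{pic} then gives that $\Omega_{\OO}$ (resp. $\Omega_{\OO}(\log D)$) is torsion of order dividing $n$, and the fact that some $\rho_i$ has order exactly $n$ (one may pick $x_i$ realizing the lcm) forces the order to be exactly $n$.

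For part (2), I would argue that $\tilde q:=p^*q$ is a holomorphic section of $\Omega_{\OO}^{\otimes k}$ (resp. $\Omega_{\OO}(\log D)^{\otimes k}$) by checking holomorphy locally. Over the locus where $p$ is étale this is clear since $q$ is meromorphic with poles only at the $x_i$, and away from the $x_i$ the differential $q$ is holomorphic (Lemma \ref{lemma1} says $q$ has no zeroes, and its poles are exactly the orbifold points). Around an interior singular point $x_i$ with $\nu_f(x_i)=m$, use the orbifold chart $\Delta_i\to U_i$, $w\mapsto z=w^m$: then $p^*q=q(w^m)(m w^{m-1}dw)^k$, and since $\mathrm{ord}_{x_i}(q)=-(1-1/m)k$ by Remark \ref{remark1}, we get $\mathrm{ord}_{w=0}(p^*q)=m\bigl(-(1-1/m)k+k\bigr)-k=m\cdot(k/m)-k=0$, so $p^*q$ is holomorphic and nonvanishing at the orbifold point; this is exactly the content of equation \eqref{eq:1} read in reverse. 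Around a boundary point $x\in D$, where $\mathrm{ord}_x(q)=-k$, the section $q/(dz/z)^k$ is a holomorphic function times $(dz)^k/z^k=(dz/z)^k$, i.e. $q$ has a pole of order exactly $k$, which is precisely what it means for $p^*q$ to be a holomorphic section of $\Omega_{\OO}(\log D)^{\otimes k}$ (each $dz/z$ being a holomorphic logarithmic differential). Assembling the local pictures gives the global statement.

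The main obstacle I anticipate is the bookkeeping in part (1): making the degree computation for $\Omega_{\OO}^{\otimes n}$ rigorous requires care about what "degree" means for a line bundle on the champ $\OO$ versus its descent to $\PP$, and about the precise normalization of the logarithmic contributions at boundary points in cases (i)–(ii). One must be sure that the exact sequence \eqref{pic} is being applied to $\Omega_{\OO}^{\otimes n}$ with the correct identification of its image in $\prod_i(\rho_i:\mu_{n_i}\to\C^*)$ — namely that raising to the $n$-th power kills every $\rho_i$ — and that the resulting element of $Pic(\PP)$ is genuinely of degree $0$ rather than merely torsion. The cleanest route is to phrase everything through the orbifold Euler characteristic: $\deg_{\mathrm{orb}}\Omega_{\OO}=-\chi(\OO)=0$, and $\Omega_{\OO}^{\otimes n}$ has integral orbifold degree equal to its ordinary degree on $\PP$, which is therefore $0$. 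Once that identity is pinned down, everything else is the routine local analysis sketched above, driven entirely by \eqref{eq:1} and the pole orders from Remark \ref{remark1}.
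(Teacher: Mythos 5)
Your proposal is correct and follows essentially the same route as the paper: part (1) is established by noting the local representations all have order dividing $n$ and reducing to $\deg(\Omega_{\OO})=-\chi(\OO)=0$ via the exact sequence \eqref{pic}, and part (2) is the local order computation at each orbifold point using the pole orders $-(1-\frac{1}{m})k$ from Remark \ref{remark1}. Your version is merely more explicit about the degree bookkeeping and about why the torsion order is exactly $n$ rather than a divisor of it.
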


\begin{proof}
1) From the definition of $n$ all the local representations \eqref{pic}
have order that divides $n$, so we need only to show that $deg(\Omega_{\OO})=0$ (resp. $deg(\Omega_{\OO}(\log D))=0$, but this follows from the fact that $deg(\Omega_{\OO})=-\chi(\OO)$ (resp. $deg(\Omega_{\OO}(\log D))=-\chi(\OO,D))$.\\
2) Let us consider a weighted point $x\in \OO$ of order \textit{n}.
The projection map $p:\OO \to \PP$ in the orbifold coordinate `\textit{s}'
around \textit{x} takes the form $s \to s^n$.\\
We have seen in \ref{remark1} that $p(x)$
is a pole of \textit{q} of order $\left(1-\frac{1}{n}\right)k$, so we have that
$$
\tilde{q}(z)= p^*\left(const \cdot z^{\frac{k}{n}}\left(\frac{dz}{z}\right)^k\right)=const\cdot ds^k
$$
Thus $\tilde{q}=p^*q$ is a holomorphic section of $\Omega_{\OO}^{\otimes k}$
(resp.  $\Omega_{\OO}(\log D)^{\otimes k}$)
satisfying $\tilde{f}^*(\tilde{q})\,//\,\tilde{q}$.
\end{proof}

\begin{lemma}\label{lemma3}
Let $\OO=(\PP,\nu_f)$ (resp. $(\OO,D)=(\PP,\nu_f)$) be an orbifold
(resp. orbifold with boundary) invariant for \textit{f} as in Lemma \ref{mainlemma}.
There exists an elliptic curve \textit{E}
(resp. the multiplicative group $\mathbb{G}_m$)
 which is a $\mu_n$-torsor over $\OO$
(resp. over $\OO\backslash D$)
 where $n$ is the order of torsion of $\Omega_{\OO}$ (resp. $\Omega_{\OO}(\log D))$.
 Moreover \textit{E} (resp.  the multiplicative group $\mathbb{G}_m$ viewed as
the manifold $(\PP, 0+\infty)$ with boundary)
is invariant for \textit{f}, i.e. there exists a morphism of torsors $F: E \to E$ (resp.  $F:(\PP, 0+\infty)\rightarrow (\PP, 0+\infty)$) such that the following diagram commutes:
\begin{equation}\label{diag}
\begin{tikzcd}[column sep=2.5pc,row sep=2pc]
(X, \partial) \arrow{r}{F} \arrow{d}[swap]{\pi} & (X, \partial) \arrow{d}{\pi} \\
(\OO, D) \arrow{r}{f}   & (\OO,D)
\end{tikzcd}
\end{equation}
where $(X,\partial)= (E, \emptyset)$ (resp. $(X,\partial)=(\PP, 0 + \infty)$).
\end{lemma}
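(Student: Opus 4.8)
The plan is to exhibit $E$ (resp. $\mathbb{G}_m$) as the $\mu_n$-torsor $\mathcal{E}$ associated, via Lemma \ref{torsor}, to the torsion line bundle $\Omega_{\OO}$ (resp. $\Omega_{\OO}(\log D)$), whose order of torsion is $n$ by Lemma \ref{lemma:trivial}, and then to check three things: that the total space of this torsor is indeed a smooth curve of the stated type, that it carries no orbifold structure, and that $f$ lifts to it. First I would record that by Lemma \ref{torsor} the torsor $\mathcal{E}\subset\Omega_{\OO}$ is, up to almost-unique isomorphism, $F_n^{-1}(\OO\times\lambda)$ for a trivialisation $F_n:\Omega_{\OO}^{\otimes n}\xrightarrow{\sim}\OO\times\C$ coming from the $n$-torsion, and that at a weighted point $x_i$ of weight $\nu_f(x_i)=n_i\mid n$ the local monodromy of $\mathcal{E}$ is $\ker(\rho_i)$, where $\rho_i:\mu_{n_i}\to\C^*$ is the local representation of $\Omega_{\OO}$ given by \eqref{diff}, namely $\theta\mapsto\theta$. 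Since this $\rho_i$ is \emph{faithful} on $\mu_{n_i}$, its kernel is trivial, so $\mathcal{E}$ has no orbifold points over any $x_i$: it is a genuine manifold. A local computation (pulling back $ds$ as in the proof of Lemma \ref{lemma:trivial}) shows the map $\pi:\mathcal{E}\to\OO$ is, near $x_i$, the $n$-to-$1$ map $s\mapsto s^{n/n_i}$ composed with the chart $U_i=[\Delta_i/\mu_{n_i}]$, i.e.\ it is étale of degree $n$ everywhere once the orbifold structure is accounted for, so $\mathcal{E}$ is a smooth compact curve equipped with a free $\mu_n$-action whose quotient stack is $\OO$.

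Next I would identify $\mathcal{E}$ by computing its genus via Riemann--Hurwitz for the degree-$n$ cover $\pi$, or more cheaply by observing that $\pi^*\Omega_{\OO}$ has a nowhere-vanishing section. Indeed, by construction $\mathcal{E}$ sits inside the total space of $\Omega_{\OO}$, so there is a tautological section of $\pi^*\Omega_{\OO}$ over $\mathcal{E}$ which is exactly the composite $\mathcal{E}\hookrightarrow\Omega_{\OO}$; it vanishes nowhere because $\lambda\in\C^*$. Hence $\Omega_{\mathcal{E}}=\pi^*\Omega_{\OO}$ is trivial, so $\mathcal{E}$ has genus $1$ in the compact (boundaryless) cases (iii)--(vi), i.e.\ it is an elliptic curve $E$ once we choose an origin. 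In the cases (i),(ii) with boundary $D$, the same computation applied to $\Omega_{\OO}(\log D)$ shows $\pi^*\Omega_{\OO}(\log D)$ is trivial on $\mathcal{E}$, hence $\mathcal{E}$ is a curve with trivial $\Omega(\log(\text{boundary}))$, which forces it to be $\PP$ with exactly two boundary points, i.e.\ $\mathbb{G}_m$ viewed as $(\PP,0+\infty)$; the two boundary points of $D$ each have a single preimage (since $n/\infty$ should be read as monodromy $\Z$, and the cover restricted to a punctured disc around a weight-$\infty$ point is connected), giving the claimed $\partial=0+\infty$.

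Finally I would construct the lift $F$. The point is that $f$ lifts to $\tilde f:\OO\to\OO$ (the hypothesis of the section, diagram \eqref{diag1}), and $\tilde f^*\Omega_{\OO}\cong\Omega_{\OO}$ as torsion line bundles because $\tilde f$ is a map of orbifolds and the statement $\tilde f^*\tilde q\,//\,\tilde q$ from Lemma \ref{lemma:trivial}(2) exhibits an isomorphism $\tilde f^*\Omega_{\OO}^{\otimes k}\cong\Omega_{\OO}^{\otimes k}$; combined with $n\mid k$ being handled by taking an appropriate root, or more robustly by noting that both bundles are $n$-torsion and $\tilde f^*$ preserves the subgroup of $n$-torsion, one gets $\tilde f^*\mathcal{E}\cong\mathcal{E}$ as $\mu_n$-torsors. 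Pulling the torsor $\mathcal{E}\to\OO$ back along $\tilde f$ and using this isomorphism gives a $\mu_n$-equivariant map $F:\mathcal{E}\to\mathcal{E}$ covering $\tilde f$, hence covering $f$ after composing with $p$; explicitly, $F$ is "multiplication of $q$-values by the cocycle of $\tilde f^*q//q$". The main obstacle I anticipate is precisely this last coherence issue: $\Omega_{\OO}$ is $n$-torsion while $q$ lives in the $k$-th power, so one must be careful that the isomorphism used to trivialise $\tilde f^*\Omega_{\OO}$ and build $F$ is compatible with the chosen $n$-th root trivialisation defining $\mathcal{E}$, and that $\lambda$ (and the scalar in $\tilde f^*\tilde q=\text{const}\cdot\tilde q$) is absorbed correctly so that $F$ is genuinely $\mu_n$-equivariant rather than equivariant only up to an automorphism of the torsor; keeping track of "almost-unique" in Lemma \ref{torsor} is where the real work lies, everything else being the local chart computations sketched above.
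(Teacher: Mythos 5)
Your proposal is correct and follows essentially the same route as the paper: the torsor $\mathcal{E}$ associated to $\Omega_{\OO}$ (resp.\ $\Omega_{\OO}(\log D)$) via Lemma \ref{torsor}, smoothness from the faithfulness of the local representations \eqref{diff}, identification of $\mathcal{E}$ as an elliptic curve resp.\ $\mathbb{G}_m$, and the lift $F$ obtained from $f^*\Omega_{\OO}\cong\Omega_{\OO}$ descending to an isomorphism of $\mu_n$-torsors $f^*\mathcal{E}\cong\mathcal{E}$ by the injectivity of $H^1(\OO,\mu_n)\to H^1(\OO,\mathbb{G}_m)$ in the Kummer sequence. The only cosmetic difference is that you trivialise $\Omega_{\mathcal{E}}$ via the tautological section to get genus $1$, where the paper simply invokes $\chi(\mathcal{E})=0$; and your worry about the $k$ versus $n$ coherence is moot on your own primary route, since the lift uses the canonical isomorphism $\tilde f^*\Omega_{\OO}\cong\Omega_{\OO}$ directly rather than the one induced by $q$.
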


\begin{proof}
We have seen in \ref{lemma:trivial} that if $n= lcm\{n_1,\dots,n_r\}$,
then $\Omega_{\OO}$ (resp. $\Omega_{\OO}(\log D)$) is a torsion line bundle
of order \textit{n} and we know from Lemma \ref{torsor} that it defines a unique $\mu_n$ torsor.
On the other hand, the representation defining $\Omega_{\OO}$ (resp. $\Omega_{\OO}(\log D)$) in a neighborhood of each $x_i$ is given by
the action of $\mu_{n_i}$ on differential forms \eqref{diff}
which is a faithful representation so by the
second part of \ref{torsor} it follows that $E:=\mathcal{E}$ is a Riemann surface (resp. Riemann surface with boundary) with Euler characteristic 0 i.e. \textit{E} is an elliptic curve if $\OO$ is compact, resp. $\mathbb{G}_m$
identified with $(\PP, 0+\infty)$ should the boundary be non-empty.\\

It remains to show that this implies the existence of a map of torsors $F: E \to E$, resp. with boundary, which makes the diagram \eqref{diag} commute. This follows from the fact that the isomorphism $f^*\Omega_{\OO} \mathop{\to}\limits^{\sim} \Omega_{\OO}$
(resp. $f^*\Omega_{\OO}(\log D) \mathop{\to}\limits^{\sim}
\Omega_{\OO}(\log D)$)
affords an isomorphism of $\mu_n$-torsors $f^*E \mathop{\to}\limits^{\sim} E$, resp. with boundary. \\

To see this, from \eqref{M101}-\eqref{M102}
we have a commutative diagram with exact rows:
\begin{center}
\begin{tikzcd}[column sep=2.5pc,row sep=2pc]
0\arrow{r} & H^1(\OO,\mu_n) \arrow{d}{f^*} \arrow{r} & H^1(\OO,\mathbb{G}_m) \arrow{d}{f^*} \arrow{r}{ n}& H^1(\OO,\mathbb{G}_m) \arrow{d}{f^*}  \\
0 \arrow{r} & H^1(\OO,\mu_n)  \arrow{r} & H^1(\OO,\mathbb{G}_m)  \arrow{r}{ n}& H^1(\OO,\mathbb{G}_m)
\end{tikzcd}
\end{center}
Thus just as $[\Omega_{\OO}]\, (\text{resp.}\,\, [\Omega_{\OO}(\log D)])\, \in H^1(\OO,\mathbb{G}_m)$ is fixed by $f^*$, so is $ [E]\in H^1(\OO,\mu_n)$, hence we have an isomorphism of $\mu_n$-torsors $E \mathop{\to}\limits^{\sim} f^*E$. Consequently
we obtain the following commutative diagram,
\begin{center}
\begin{tikzcd}[column sep=2.5pc,row sep=2pc]
E \arrow[r, "\sim"]  \arrow{rd} & f^*E \arrow{d} \arrow[r,"\tilde{f}"] & E \arrow[d] \\
{} & \OO \arrow[r,"f"] & \OO
\end{tikzcd}
\end{center}
and hence by composition we obtain a $\mu_n$-equivariant map $F: E \to E$.\\

\end{proof}
\begin{cor}\label{corollary}
Let  $f: \PP \to \PP$ be a rational map of degree $d>1$ which satisfies \textbf{Assumption} \ref{assum}.
Then \textit{f} preserves a parabolic orbifold (resp. a parabolic orbifold with boundary) which can be realized as a quotient of $\C$ (resp. of $\C \cup \{\infty\})$ by the action of a discrete subgroup of $Aut(\C)$.\\
The following table illustrates all the possibilities:\\

\begin{table}[htb]
\begin{tabular}{|c|c|}
  \hline
  Orbifold $\OO$ preserved by \textit{f}  & $\Gamma \subset Aut(\C)$ defining $\OO$ \\
  \hline
  $(\infty,\infty)$ & $\Z$, acting by translations \\
   \hline
  $(2,2,\infty)$& $<\Z, z \mapsto -z >$ \\
   \hline
  $(2,2,2,2)$ & $<\Z\oplus\Z\tau,z \mapsto -z>$; $\tau$ s.t. $\Im(\tau) >0$\\
   \hline
  $(3,3,3)$ & $<\Z[\zeta], z \mapsto \zeta z>$; $\zeta$ s.t. $\zeta^2 +\zeta +1=0$ \\
   \hline
  $(2,4,4)$ & $<\Z[i],z \mapsto i z>$, $i$ s.t. $i^2+1=0$ \\
   \hline
  $(2,3,6)$ & $<\Z[\zeta],z \mapsto -\zeta z>$, $\zeta$ as above  \\
  \hline
\end{tabular}
\end{table}
\end{cor}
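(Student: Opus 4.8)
The plan is to combine Lemma~\ref{mainlemma} with Lemma~\ref{lemma3} and then to make the uniformization explicit. First, by Lemma~\ref{mainlemma} the map $f$ preserves one of the six parabolic orbifolds (or orbifolds with boundary) listed there, and by Lemma~\ref{lemma3} there is a $\mu_n$-torsor $\pi:(X,\partial)\to(\OO,D)$ where $(X,\partial)$ is either an elliptic curve $E$ (the compact cases $(iii)$--$(vi)$) or $\mathbb{G}_m\cong(\PP,0+\infty)$ (the cases $(i)$, $(ii)$ with boundary), together with a morphism of torsors $F$ lifting $f$. So the only thing left to do is to identify, case by case, the orbifold $\OO$ with a quotient $\C/\Gamma$ (resp. $(\C\cup\{\infty\})/\Gamma$) for an explicit discrete $\Gamma\subset\mathrm{Aut}(\C)$, and to read off the entries of the table.

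The key observation that drives the identification is the classical theory of uniformization of parabolic orbifolds: a parabolic orbifold with underlying space $\PP$ is uniformized by $\C$, and its orbifold fundamental group acts on $\C$ by affine transformations $z\mapsto az+b$, with $a$ a root of unity of order equal to the corresponding cone angle. Concretely, I would proceed as follows. For $(\infty,\infty)$: removing the two boundary points gives $\C^\ast=\mathbb{G}_m$, whose universal cover is $\C$ via $\exp$, with deck group $\Z$ acting by translations $z\mapsto z+2\pi i$; so $\Gamma=\Z$ acting by translations. For $(2,2,\infty)$: this is the quotient of $(\infty,\infty)$ by the involution $z\mapsto -z$ on $\mathbb{G}_m$ (which fixes $\pm1$, the two weight-$2$ points, and swaps $0,\infty$), so $\Gamma=\langle\Z,\,z\mapsto -z\rangle$. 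For the four compact cases, the torsor $E$ from Lemma~\ref{lemma3} is an elliptic curve $\C/L$ with complex multiplication by the ring generated by the relevant root of unity: $(2,2,2,2)$ comes from $E=\C/(\Z\oplus\Z\tau)$ modulo $z\mapsto -z$ (the four $2$-torsion-like fixed points), giving $\Gamma=\langle\Z\oplus\Z\tau,\,z\mapsto -z\rangle$; $(3,3,3)$ from $E=\C/\Z[\zeta]$, $\zeta^2+\zeta+1=0$, modulo $z\mapsto\zeta z$; $(2,4,4)$ from $E=\C/\Z[i]$ modulo $z\mapsto iz$; and $(2,3,6)$ from $E=\C/\Z[\zeta]$ modulo $z\mapsto-\zeta z$ (order $6$). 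In each case $\chi(\OO)=0$ forces the lattice to be stable under the cyclic rotation, which is exactly the CM condition, and a direct count of fixed points of the rotation subgroups on $E$ recovers the prescribed cone points — e.g. for $z\mapsto\zeta z$ on $\C/\Z[\zeta]$ the three fixed points $0,\,\tfrac{1}{\sqrt{-3}},\,\tfrac{2}{\sqrt{-3}}$ (up to translation) each have stabilizer $\mu_3$, yielding $(3,3,3)$.

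Having produced $\Gamma$ in each case, I would note that $\Gamma$ is by construction a discrete subgroup of $\mathrm{Aut}(\C)$ (the affine group), that $\C/\Gamma\cong\OO$ as orbifolds (resp. $(\C\cup\{\infty\})/\Gamma\cong(\OO,D)$ after adjoining the cusp(s)), and that the quotient map $\C\to\OO$ factors through the torsor $\pi:X\to\OO$ of Lemma~\ref{lemma3}: indeed $X=\C/L$ where $L=\Gamma\cap\{\text{translations}\}$ is the translation subgroup, and the finite group $G=\Gamma/L$ is the group of automorphisms of $X$ realizing $X\to\OO$ as the $\mu_n$-torsor ($n=|G|=\mathrm{lcm}$ of the weights away from $D$). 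This matches the $\mu_n$-torsor of Lemma~\ref{lemma3} by its uniqueness clause. Finally, by Lemma~\ref{lemma3} the map $f$ lifts to $F:X\to X$ commuting with $\pi$; composing with $\C\to X$ and using that $F$ is a torsor morphism, $F$ lifts further to an affine self-map $\widetilde F:\C\to\C$ normalizing $\Gamma$, which is the desired realization of the dynamics inside $\mathrm{Aut}(\C)$. The statement of the corollary is exactly the existence of $\Gamma$ and the table, so assembling these observations completes the proof.

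The main obstacle is not conceptual but bookkeeping: one must verify in each of the six cases that the group $\Gamma$ written in the table really produces the correct orbifold $\OO$ — i.e. that the rotation part has exactly the listed fixed points with exactly the listed stabilizers, and that no extra cone points appear — and that the lattice $L$ is genuinely preserved by the cyclic rotation (the CM constraint). This is where one uses $\chi(\OO)=0$ critically: it rules out, for instance, a rotation of order $5$ (no $5$-torsion parabolic orbifold exists, consistent with Remark~\ref{remark1}), and pins down the lattice up to the scaling that is irrelevant for the orbifold. None of these verifications is deep, but they are the substance of the case analysis; everything else is a direct invocation of Lemmas~\ref{mainlemma}, \ref{torsor}, \ref{lemma:trivial} and \ref{lemma3}.
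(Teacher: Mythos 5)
Your proposal is correct and follows essentially the same route as the paper: invoke Lemma \ref{mainlemma} and Lemma \ref{lemma3} to realize $\OO$ as $[E/\mu_n]$ (resp.\ a quotient of $\mathbb{G}_m$), lift the $\mu_n$-action to linear maps $z\mapsto\alpha z$ on the universal cover $\C$, and use the condition $\alpha\Lambda=\Lambda$ to pin down $\Lambda=\Z[\mu_n]$ for $n=3,4,6$ and a generic lattice for $n=2$. Your write-up is more detailed (explicit fixed-point counts, the factorization of $\C\to\OO$ through the torsor), but the substance matches the paper's proof.
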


\begin{proof}
We have seen that there exists a covering map $\pi:E \to \OO$ which can be viewed as the quotient map $E \to \left[E/\mu_n\right]$, for $n=2,3,4,6$, (resp. we have an isomorphism $\pi:\C^*\to \OO\setminus D$ in the case $(\infty,\infty)$, or a double cover $\pi:\C^* \to \OO\setminus D $ in the case $(2,2,\infty)$).
Consequently we have \begin{tikzcd}[cramped, sep=small]  \mu_{n}\arrow[r,hook] & Aut(E) \end{tikzcd}. Every such automorphism can be lifted to a linear map `$z \mapsto \alpha z$' on $\C$, the universal covering space of \textit{E}, which must satisfy $\alpha \Lambda = \Lambda$, where $\Lambda$ is the lattice defining \textit{E}. When $\alpha\in \mu_n$ a simple computation shows that $\Lambda = \Z[\mu_n]$ for $n=3,4,6$. In the case $n=2$ the condition above is empty, hence $\Lambda$ is generic.
\end{proof}

\begin{remark}\label{remark2}
The orbifolds listed in Corollary \ref{corollary} are the only one which can
be realized as quotients of an elliptic curve \textit{E} for the action of
a group of automorphisms of \textit{E}.
In fact it is well known that the group of automorphisms of an elliptic curve \textit{E}
is a finite cyclic group $G$ of order 2,4 or 6 (see \cite{0387962034}).\\
Consider the following exact sequence of algebraic groups:
\begin{center}
\begin{tikzcd}[column sep=2.5pc,row sep=2.5pc]
0 \arrow[r] & Aut^0(E) \arrow[r] & Aut(E) \arrow[r] & G \arrow[r] & 0
\end{tikzcd}
\end{center}
where $Aut^0(E)$ is the connected component of the identity in $Aut(E)$. We have that $E \mathop{\rightarrow}\limits^{\sim} Aut^0(E)$, where we identify $E$ with the subgroup of translations of $Aut(E)$.\\
Consider the action of $G$ on the elliptic curve, with quotient map $f: E \to E/G$.
It is a fact that the naive quotient $E/G$ is isomorphic to $\PP$, as the map \textit{f} is necessarily ramified and the Riemann-Hurwitz formula gives $\chi(E/G) >0$. \\
It follows that $\# Ram_f=2\# G$, so if the fiber of each $p\in E/G$ consists of $n_p$ distinct elements, each of order $e_p=\# Stab_G(p)$, we can write Riemann-Hurwitz as follows (note that we have $n_p e_p=\#G$),
 \begin{equation}\label{eq:quotient}
    \sum\limits_{p \in \PP}\left(1-\frac{1}{e_p}\right)=2
 \end{equation}
As discussed in \ref{remark1}, the only integer solutions of \eqref{eq:quotient} are given by $(iii)-(vi)$ of Lemma \ref{mainlemma}. Thus we conclude that $E/G$ can be endowed with the structure of a parabolic orbifold \textit{i.e.} an orbifold of type $(iii)-(vi)$.
\end{remark}
\subsection{Conclusions}
We conclude this chapter formulating at first a structural theorem for the maps $f$ with a parallel tensor $q$ and finally discussing the invariant nature of $q$.
\begin{theorem}
 Let  $f: \PP \to \PP$ be a rational map of degree $d>1$ which satisfies \textit{\textbf{Assumption}} \ref{assum}.\\
Then, up to conjugation with an element of $PGL_2(\C)$, $ f$ is induced by one of the automorphisms of $\C$ contained in \hyperref[table]{ Table \ref*{table} }, i.e. \textit{f} is obtained as quotient of such automorphisms under the action of a discrete group of automorphisms of the complex plane.\\
\begin{table}[htb]
\begin{tabular}{|c|c|}
  \hline
  Orbifold $\OO$ preserved by \textit{f}  & Automorphism of $\C$ inducing \textit{f} \\
  \hline
  $(\infty,\infty)$ & $z \mapsto nz$; $n\in\Z$ s.t. $|n|>1$ \\
   \hline
  $(2,2,\infty)$ & $z \mapsto nz+ \beta $; $n$ as above, $\beta=0,\frac{1}{2} $ \\
   \hline
   \multirow{2}{*}{$(2,2,2,2)$} & $z \mapsto \alpha z + \beta$; $\alpha$ an integer in an imaginary \\
  & quadratic field, $\beta \in E[2]$ \\
   \hline
  $(3,3,3)$ & $z \mapsto \alpha z + \beta$; $\alpha \in \Z[\zeta]$, $\beta=0, \frac{1}{3}(\zeta + 1), \frac 1 3 i \sqrt{3}$ \\
   \hline
  $(2,4,4)$ & $z \mapsto \alpha z + \beta$; $\alpha \in \Z[i]$, $\beta=0, \frac 1 2 (i+1)$ \\
   \hline
  $(2,3,6)$ & $z \mapsto \alpha z$; $\alpha \in \Z[\zeta]$  \\
  \hline
\end{tabular}
\caption{Dynamical systems admitting a parallel tensor}
\label{table}
\end{table}

\end{theorem}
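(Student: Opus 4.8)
The plan is to run the structure theory already established down to explicit normal forms. By Lemma \ref{lemma3} and Corollary \ref{corollary}, $f$ preserves one of the six parabolic orbifolds $\OO$ of Lemma \ref{mainlemma}; moreover $\OO=\C/\Gamma$ for the discrete group $\Gamma\subset\operatorname{Aut}(\C)$ listed in Corollary \ref{corollary}, say $\Gamma=\langle\Lambda,\rho\rangle$ with $\Lambda$ the lattice of translations and $\rho\colon z\mapsto\zeta z$ a generator of the finite cyclic part $\mu_n$; and $f$ lifts to a morphism of $\mu_n$-torsors $F\colon X\to X$ over $\tilde f\colon\OO\to\OO$, where $X=E=\C/\Lambda$ in the compact cases (iii)--(vi) and $X=\G_m$ (uniformised by $\exp\colon\C\to\C^{*}$, so again $\widetilde X=\C$) in the cases (i)--(ii) with boundary. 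Everything then becomes a statement about the induced self-map of $\C$.

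First I would show that $F$ lifts to an \emph{affine} map $\hat F(z)=\alpha z+\beta$ of $\C$. In the compact case $\hat F$ conjugates the deck translations by $\Lambda$ into $\operatorname{Aut}(\C)$, so $\hat F(z+\lambda)=\hat F(z)+\sigma(\lambda)$ for a homomorphism $\sigma\colon\Lambda\to\Lambda$, whence $\hat F'$ is $\Lambda$-periodic; being a holomorphic function on the compact curve $E$ it is constant, so $\hat F'\equiv\alpha$ and $\hat F(z)=\alpha z+\beta$. In the boundary case $F$ extends to a rational self-map of $\PP$ with $F^{-1}(\{0,\infty\})\subseteq\{0,\infty\}$, and an elementary count of the divisor $F^{*}(0)+F^{*}(\infty)$ forces $F(w)=cw^{\pm d}$, which pulls back along $\exp$ to an affine $\hat F$ with integer multiplier $\pm d$. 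Next, $\hat F$ must normalise $\Gamma$ in order to descend to $\tilde f$, hence to $f$ on $\C/\Gamma\cong\PP$: conjugating a translation $z\mapsto z+\lambda$ by $\hat F$ yields $z\mapsto z+\alpha\lambda$, so $\alpha\Lambda\subseteq\Lambda$, i.e.\ $\alpha$ lies in $\operatorname{End}(E)$ — the ring $\Z[\zeta]$ or $\Z[i]$ in cases (iv)--(vi) and an order in an imaginary quadratic field (or $\Z$) in case (iii) — while the identity $\deg(p\circ\pi)\cdot\deg F=\deg f\cdot\deg(p\circ\pi)$ gives $\deg F=d$, hence $|\alpha|^{2}=d$. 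Conjugating the rotation $\rho$ by $\hat F$ yields $z\mapsto\zeta z+(1-\zeta)\beta$, which lies in $\Gamma$ exactly when $(1-\zeta)\beta\in\Lambda$; running through $n=2,3,4,6$ this says precisely that $\beta$ is a $\rho$-fixed point modulo $\Lambda$, so $\beta\in E[2]$ for (iii), one of three classes for (iv), one of two classes for (v), and $\beta=0$ for (vi) since $1-\zeta_{6}$ is a unit; in case (ii) the residual sign in $F(w)=\pm w^{\pm d}$ gives $\beta\in\{0,\tfrac12\}$, and in case (i) no orbifold points survive and $\hat F(z)=nz$.

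Finally I would normalise by $PGL_{2}(\C)$-conjugacy. Conjugating $f$ by $\operatorname{Aut}(\OO)\hookrightarrow PGL_{2}(\C)$ corresponds to conjugating $\hat F$ by affine maps normalising $\Gamma$; in the boundary cases this absorbs the constant $c$ (and $z\mapsto-z$ identifies $w\mapsto w^{d}$ with $w\mapsto w^{-d}$), reducing $\hat F$ to the forms $z\mapsto nz$ resp.\ $z\mapsto nz+\beta$ of rows (i)--(ii); in the compact cases conjugation by $z\mapsto cz+d_{0}$ sends $\beta$ to $c\beta+(1-\alpha)d_{0}$, which trims the finite set of admissible $\beta$'s to the representatives displayed in Table \ref{table}. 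Collating the six cases yields exactly that table. The main obstacle is precisely this last bookkeeping: pinning down $\operatorname{Aut}(\OO)$ as the correct subgroup of $PGL_{2}(\C)$ in each case and using it to put the translation parameter $\beta$ — and, in case (ii), the leftover sign — into canonical form without either collapsing or over-counting a case, together with the extra care needed in the non-generic $n=2$ situation, where the lattice may admit complex multiplication and $\alpha$ ranges over a quadratic order rather than over $\Z$.
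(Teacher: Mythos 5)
Your proposal is correct and follows essentially the same route as the paper: lift $f$ via Lemma \ref{lemma3} and Corollary \ref{corollary} to an affine map $z\mapsto\alpha z+\beta$ of $\C$, constrain $\alpha$ by $\alpha\Lambda\subseteq\Lambda$ (so $\alpha\in End(E)$ and $d=|\alpha|^{2}$) and $\beta$ by $(1-\zeta)\beta\in\Lambda$, which is exactly the paper's condition \eqref{fixed}, and then enumerate case by case. You supply more detail than the paper on why the lift is affine and on the final $PGL_{2}(\C)$ normalization; the only slip is the parenthetical claim that $z\mapsto-z$ identifies $w\mapsto w^{d}$ with $w\mapsto w^{-d}$ (these are not conjugate, since for the former $0$ and $\infty$ are superattracting fixed points while for the latter they form a $2$-cycle), but this is harmless because the table already admits every $n\in\Z$ with $|n|>1$.
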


\begin{proof}
This classification is originally due to the work of A. Douady and J. H. Hubbard \cite{MR1251582}. \\
From \ref{corollary} we know that \textit{f} preserves one of the orbifolds listed above, which is isomorphic to the quotient space $\left[E/\mu_n\right]$, for $n=2,3,4,6$. Moreover, as we have seen in \ref{lemma3}, \textit{f} lifts to a map $F:E \to E$ which commutes with the action of $\mu_n$. \\
It is well known, \cite{0387962034}, that $F$ is the composition of a translation with an endomorphism of $E$, so, as we have $End(\C/\Z[\mu_n]) \cong \Z[\mu_n]$, all endomorphisms are allowed since they commute with the action of $\mu_n$. However, the translation by any $Q \in E$ commutes with the action if and only if $Q$ is fixed by $\mu_n$. Consequently the only translations allowed are the solutions of
\begin{equation}\label{fixed}
  \theta z \equiv z \;(mod\,\Z[\theta])
\end{equation}
where $\theta$ is a primitive $n$-th root of unity. \hyperref[table]{ Table \ref*{table} } contains the solutions of \eqref{fixed} lying in the fundamental domain of the elliptic curve. For $n=2$ they consist of the subgroup $E[2]$ of 2-torsion, while there are only two non-zero fixed points for $n=3$, one for $n=4$ and there are no fixed points different from zero for $n=6$, since the order of the stabilizer at each point is given by its weight as a singular point in the orbifold structure.\\
Note that, modulo composition with a translation, \textit{f} lifts to an automorphism of the complex plane of the form $z\mapsto \alpha z$. Recall that when $\alpha \notin \Z$ we say that the elliptic curve $E$ has complex multiplication and if multiplication by a complex $\alpha$, (which must be an integer in some imaginary quadratic field, as $\alpha \Lambda \subset \Lambda$, see \cite{MR1251582}) is allowed in $E$, then the complex structure of $E$ is completely determined, i.e. we have $\Lambda \otimes_{\Z} \Q=\Q[\alpha]$.\\
Observe that we can compute explicitly the degree of \textit{f} in each case. In fact, from \eqref{diag} we deduce that $d=deg(f)=deg(F)$ and we know that, given $m \in \Z$, the ``multiplication by m'' $[m]:E \to E$, has degree $|m|$ if $E \cong \C^*$, while it has degree $m^2$ in the other cases, since it is equal to $\# \left(\Lambda[\frac 1 m]/\Lambda\right)$. If $E$ has complex multiplication $[\alpha]: z \mapsto \alpha z$, then as $deg([\alpha] \circ [\overline{\alpha}])=|\alpha|^4$, we obtain $deg([\alpha])= |\alpha|^2$ (see also \cite{MR1251582}).
\end{proof}

\begin{remarks}\
\begin{enumerate}[$\bullet$]
  \item Note that in each case we have written the maps $f:\PP \to \PP$ in the form $f=A\circ e$, where $A \in PGL_2(\C)$ corresponds to the translation by a fixed point on \textit{E} and \textit{e} is a rational map with the same degree of \textit{f}, corresponding to the endomorphim $z \mapsto \alpha z$ on $E$. Observe that the automorphism $A$ acts by permuting the poles of \textit{q}.\\
      A simple computation shows that $A$ has order 2 or 3. In fact if $E= \C/\Z[i]$ such a translation permutes the two fixed points of $z \to iz$, hence the corresponding map on $\PP$ has the form $z \mapsto 1/z$. If $E= \C/\Z[\zeta]$, the three fixed points are permuted cyclically by the translation, hence the corresponding map on $\PP$ has the form $z \mapsto \zeta z$.
  \item Cases (1) and (2) of Table \ref{table} are the most explicit:\\
In case (1) they are the maps $z\mapsto z^n$, since the exponential function is the universal covering map of $\C^*$.\\
In case (2) they are (up to sign) the Tchebycheff polynomials $P_n(z)$ defined by $P_n(cosz)=cos(nz)$, since the cosine function is the universal covering map of $\C^*$ which commutes with $z \mapsto -z$.
\item Note that in each case we have shown that (modulo multiplication by an element of $PGL_2(\C)$) the action of \textit{f} on $\PP$, which a priori is given by a semigroup, is globally equivalent to the action of some discrete group $G$ on $\C$, given by an extension of $\Z$ by $\Lambda$
    \begin{center}
\begin{tikzcd}[column sep=2.5pc,row sep=2.5pc]
0 \arrow[r] & \Lambda \arrow[r] & G \arrow[r] & \Z \arrow[r] & 0
\end{tikzcd}
\end{center}
\textit{i.e.} $G$ is the semidirect product $\Z \rtimes \Lambda$, with the obvious action.
\end{enumerate}
\end{remarks}

\noi We have seen in \ref{lemma:trivial} that $\tilde{q}=p^*q$ is a holomorphic section of $\Omega_{\OO}^{\otimes k}$, hence $\pi^*\tilde{q}$ is a constant multiple of $dz^{\otimes k}$. \\
It follows that the eigenvalue $\lambda$ such that $f^*q=\lambda q$ can be computed explicitly.
Referring to \hyperref[table]{ Table \ref*{table} }, in the first two cases we have simply $|\lambda |=deg(f)^k=d^k$
since the lifted map is multiplication by $m$ on $\C^*$, which has degree $|m|$, and clearly $[m]^*dz^{\otimes k}=m^kdz^{\otimes k}$. Finally, in the other cases, we have $d=|\alpha|^2$
from which we deduce $|\lambda|=d^{k/2}$.

\bibliography{bibbase}

\begin{thebibliography}{1}

\bibitem{3540051902}
{\sc P.~Deligne}, {\em Equations Differentielles a Points Singuliers Reguliers
  (Lecture Notes in Mathematics) (French Edition)}, Springer, 1970.

\bibitem{MR0498552}
{\sc P.~Deligne}, {\em Th\'eorie de {H}odge. {III}}, Inst. Hautes \'Etudes Sci.
  Publ. Math.,  (1974), pp.~5--77.

\bibitem{MR1251582}
{\sc A.~Douady and J.~H. Hubbard}, {\em A proof of {T}hurston's topological
  characterization of rational functions}, Acta Math., 171 (1993),
  pp.~263--297.

\bibitem{1999math......2158E}
{\sc A.~{Epstein}}, {\em {Infinitesimal Thurston Rigidity and the
  Fatou-Shishikura Inequality}}, ArXiv Mathematics e-prints,  (1999).

\bibitem{3540657614}
{\sc G.~LAUMON}, {\em Champs algÇ¸briques}, Springer, 1999.

\bibitem{2004math......2147M}
{\sc J.~W. {Milnor}}, {\em {On Latt\`es Maps}}, ArXiv Mathematics e-prints,
  (2004).

\bibitem{0387962034}
{\sc J.~H. Silverman}, {\em The Arithmetic of Elliptic Curves: v. 106 (Graduate
  Texts in Mathematics)}, Springer, 1994.

\bibitem{MR1435975}
{\sc W.~P. Thurston}, {\em Three-dimensional geometry and topology. {V}ol. 1},
  vol.~35 of Princeton Mathematical Series, Princeton University Press,
  Princeton, NJ, 1997.
\newblock Edited by Silvio Levy.

\end{thebibliography}
\bibliographystyle{siam}

\end{document}